\newcommand{\R}{{\mathbb R}}
\def\Xint#1{\mathchoice
{\XXint\displaystyle\textstyle{#1}}%
{\XXint\textstyle\scriptstyle{#1}}%
{\XXint\scriptstyle\scriptscriptstyle{#1}}%
{\XXint\scriptscriptstyle\scriptscriptstyle{#1}}%
\!\int}
\def\XXint#1#2#3{{\setbox0=\hbox{$#1{#2#3}{\int}$ }
\vcenter{\hbox{$#2#3$ }}\kern-.6\wd0}}
\def\dashint{\Xint-}
\newcommand{\mres}{\mathbin{\vrule height 1.6ex depth 0pt width
0.13ex\vrule height 0.13ex depth 0pt width 1.3ex}}
\newcommand{\Ss}{\mathbb{S}}
\newcommand{\Z}{\mathbb{Z}}
\newcommand{\diam}{\textnormal{diam$\,$}}
\newcommand{\Ha}{\mathcal{H}}
\newcommand{\modd}{\textnormal{mod}}
\newcommand{\capp}{\textnormal{cap}}
\newcommand{\degg}{\textnormal{deg\,}}
\def\Aint{- \nobreak \hskip-12.5pt \nobreak \int}
\newtheorem{theorem}{\textbf{THEOREM}}[section]
\newtheorem{lemma}[theorem]{\textsc{Lemma}}
\newtheorem{proposition}[theorem]{\textsc{Proposition}}
\newtheorem{corollary}[theorem]{\textsc{Corollary}}
\theoremstyle{definition}
\newtheorem{assumptions}[theorem]{\textsc{Assumptions}}
{\theoremstyle{remark} }
\def\charfn_#1{{\raise1.2pt\hbox{$\chi_{\kern-1pt\lower3pt\hbox{{$\scriptstyle#1$}}}$}}}
\def\leq{\leqslant }
\def\geq{\geqslant }
\def\Xint#1{\mathchoice
{\XXint\displaystyle\textstyle{#1}}%
{\XXint\textstyle\scriptstyle{#1}}%
{\XXint\scriptstyle\scriptscriptstyle{#1}}%
{\XXint\scriptscriptstyle\scriptscriptstyle{#1}}%
\!\int}
\def\XXint#1#2#3{{\setbox0=\hbox{$#1{#2#3}{\int}$}
\vcenter{\hbox{$#2#3$}}\kern-.5\wd0}}
\def\dashint{\Xint-}
\begin{document}

\title{Duality of moduli in regular toroidal metric spaces}
\author{Atte Lohvansuu} 
\let\thefootnote\relax\footnote{\emph{Mathematics Subject Classification 2010:} Primary 30L10, Secondary 30C65, 28A75,
51F99.}
\thanks{The author was supported by the Vilho, Yrj\"o and Kalle V\"ais\"al\"a foundation.}
\begin{abstract}We generalize a result of Freedman and He \cite[Th. 2.5.]{FreedmanHe1991}, concerning the duality of moduli and capacities in solid tori, to sufficiently regular metric spaces. This is a continuation of the work of the author and K. Rajala \cite{LohvansuuRajala2018} on the corresponding duality in condensers.
\end{abstract}

\maketitle

\renewcommand{\baselinestretch}{1.2}

\section{Introduction}
Given a metric measure space $(T, d, \mu)$, with $\mu$ Borel-regular, and a collection $\Gamma$ of paths in $T$, the \emph{$p$-modulus} of $\Gamma$ is the number
\[
\modd_p\Gamma:=\inf_{\rho}\int_T\rho^p\, d\mu,
\]
where the infimum is taken over non-negative Borel-functions $\rho$ that satisfy
\begin{equation}\label{eq:testiehtointro}
\int_\gamma\rho\, ds\geq 1
\end{equation}
for all locally rectifiable $\gamma\in\Gamma$. The path modulus is a widely used tool in geometric function theory, especially in connection to quasiconformal mappings \cite{HKST, RickmanQR, Vaisala}.

In the 1960s, F. Gehring \cite{Gehring1962} and W. Ziemer \cite{Ziemer1967} proved that the moduli of paths connecting two compact and connected sets in $\R^n$ are dual to the moduli of surfaces that separate the two sets. The moduli of surface families are defined as above, but instead of condition \eqref{eq:testiehtointro} we require 
\[
\int_S\rho\, d\Ha^{n-1}\geq 1,
\]
where $\Ha^{n-1}$ denotes the $(n-1)$-Hausdorff measure. To describe these duality results in more detail, we need to introduce some notation. Given a connected bounded open subset $G$ of any metric space, and disjoint connected compact sets $E, F\subset G$, denote by $\Gamma(E, F; G)$ the family of paths in $G$ that intersect both $E$ and $F$, and by $\Gamma^*(E, F; G)$ the family of compact subsets of $G$ that separate $E$ and $F$. We say that a set $S$ separates $E$ and $F$ in $G$ if $E$ and $F$ belong to different components of $G-S$. Triples $(E, F, G)$ are called \emph{condensers}. Let $p^*= \frac{p}{p-1}$ be the dual exponent of $1<p<\infty$. By Gehring and Ziemer we then have
\begin{equation}\label{eq:introGZ}
(\modd_p\Gamma(E, F; G))^\frac{1}{p}(\modd_{p^*}\Gamma^*(E, F; G))^\frac{1}{p^*}=1
\end{equation}
in $\R^n$ with $n\geq 2$.

It was shown by the author and K. Rajala that a version of \eqref{eq:introGZ} holds in Ahlfors $q$-regular metric spaces that support a $1$-Poincar\'e inequality. In more detail, a special case of what is shown in \cite{LohvansuuRajala2018} is
\begin{equation}\label{eq:introLR}
\frac{1}{C}\leq (\modd_q\Gamma(E, F; G))^\frac{1}{q}(\modd_{q^*}\Gamma^*(E, F; G))^\frac{1}{q^*}\leq C
\end{equation}
for some constant $C$ that depends only on the data of the space, i.e. the constants that appear in the definitions (see Section \ref{section:results}) of Ahlfors regularity and the Poincar\'e inequalities. Here $E, F$ and $G$ are as in \eqref{eq:introGZ}, and the sets in $\Gamma^*$ are equipped with the $(q-1)$-dimensional Hausdorff measure. 

It should be noted that the inequalities \eqref{eq:introGZ} and \eqref{eq:introLR} are very similar to the reciprocality condition found in \cite{Rajala2017} and \cite{Ikonen2019}. One could also equip the surfaces with the so-called perimeter measures instead of the Hausdorff measure. In this direction a result similar to \eqref{eq:introLR} has recently been proved by Jones and Lahti \cite{JonesLahti2019}.

In this paper we aim to prove a different kind of duality result. Instead of condensers we consider spaces $T$ homeomorphic to the solid torus $\Ss^1\times \mathbb{D}$. It is natural to ask if the duality results above remain valid for the family of paths that go around the 'hole' and the family of surfaces which are bounded by meridians on the boundary torus. It turns out that this is not the case. Freedman and He \cite{FreedmanHe1991} studied conformal moduli on riemannian tori in connection with their research on divergence-free vector fields. They showed that the path-modulus can be arbitrarily small compared to the corresponding surface modulus, even in the smooth setting. However, they managed to prove a duality result by replacing the path modulus with a certain capacity. 

Suppose now that $T$ is equipped with a metric $d$ and a Borel-regular measure $\mu$, so that $(T, d, \mu)$ is Ahlfors $q$-regular. That is, there are constants $a, A>0$ such that 
\[
ar^q\leq \mu(B)\leq Ar^q
\]
for all balls $B$ with radius $r<\mathrm{diam}(T)$. 

Following Freedman and He \cite{FreedmanHe1991} we consider the \emph{degree 1 capacity} instead of the path modulus. It is defined by
\[
\capp_pT:=\inf_{\phi}\int_T\mathrm{Lip}(\phi)^p\, d\mu,
\]
where the infimum is taken over pointwise Lipschitz constants 
\[
\mathrm{Lip}(\phi)(x):=\limsup_{r\rightarrow 0}\sup_{y\in B(x, r)}\frac{|\phi(x)-\phi(y)|}{r}
\] 
of Lipschitz maps $\phi: T\rightarrow \Ss^1$ of degree 1. Loosely speaking, a map is said to have degree 1 if it takes (oriented) loops which generate the corresponding fundamental group to (oriented) generating loops in $\Ss^1$. We assume $\Ss^1$ is equipped with a metric that makes it isometric to a euclidean circle of length 1 equipped with its geodesic metric. 

The surface modulus $\modd_pT$ is defined to be the $p$-modulus of all level sets of continuous functions of degree 1, see Section \ref{section:results}, equipped with the $(q-1)$-dimensional Hausdorff measure. The main results of this paper imply the following.

\begin{theorem}\label{thm:intro}
Let $(T, d, \mu)$ be a compact Ahlfors $q$-regular metric measure space that supports a weak $1$-Poincar\'e inequality. Suppose $T$ is homeomorphic to the solid torus $\Ss^1\times\mathbb{D}$. Let $1<p<\infty$. If $\capp_pT$ is nonzero, then 
\[
\frac{1}{C}\leq (\capp_pT)^{\frac{1}{p}}(\modd_{p^*}T)^\frac{1}{p^*}\leq C
\]
where $C$ is a constant that depends only on the data of $T$. Moreover $\capp_pT=0$ if and only if $\modd_{p^*}T=\infty$.
\end{theorem}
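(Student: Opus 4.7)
The plan is to separate the theorem into the two directional inequalities plus the limiting equivalence, and prove each one separately. The left-hand inequality $\frac{1}{C}\leq(\capp_pT)^{1/p}(\modd_{p^*}T)^{1/p^*}$ is the easy direction and proceeds by combining a metric coarea inequality with H\"older. Given any Lipschitz degree-$1$ map $\phi:T\to\Ss^1$ and any admissible Borel density $\rho$ for $\modd_{p^*}T$, the coarea inequality, available on Ahlfors $q$-regular metric spaces supporting a weak $1$-Poincar\'e inequality, yields
\[
\int_T\rho\,\mathrm{Lip}(\phi)\,d\mu\ \geq\ c\int_{\Ss^1}\Bigl(\int_{\phi^{-1}(t)}\rho\,d\Ha^{q-1}\Bigr)\,dt
\]
for a constant $c$ depending only on the data. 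Because $\phi$ has degree $1$, almost every level set $\phi^{-1}(t)$ separates $T$ in the homotopy class of meridians and thus lies in the surface family defining $\modd_{p^*}T$; admissibility of $\rho$ then forces the right-hand side above to be at least $c$. H\"older's inequality gives $c\leq\|\rho\|_{L^{p^*}}\|\mathrm{Lip}(\phi)\|_{L^p}$, and taking infima over $\phi$ and $\rho$ completes the left-hand inequality.

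The right-hand inequality is the harder direction, and I would prove it by introducing an auxiliary path modulus on the infinite cyclic cover. Let $\pi:\hat T\to T$ be the infinite cyclic cover corresponding to the isomorphism $H_1(T)\cong\Z$, let $\tau$ be its generating deck transformation, and fix a lifted separating surface $\hat S_0$ so that $\hat S_0$ and $\tau\hat S_0$ cobound a fundamental domain. Denote by $\Gamma_T$ the family of paths in $T$ that lift to paths in $\hat T$ from $\hat S_0$ to $\tau\hat S_0$, that is, the winding-number-$1$ paths in $T$. I would then establish two separate comparisons: (a) $\capp_pT\leq C\,\modd_p\Gamma_T$, and (b) $(\modd_p\Gamma_T)^{1/p}(\modd_{p^*}T)^{1/p^*}\leq C$. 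For (a), take a near-optimal admissible $\rho$ for $\Gamma_T$, lift it to $\hat\rho$ on $\hat T$, and set
\[
\hat u(x)\bydef\inf_{\hat\gamma}\int_{\hat\gamma}\hat\rho\,ds,
\]
the infimum running over rectifiable paths in $\hat T$ from $\hat S_0$ to $x$. The separating property of $\tau\hat S_0$ combined with admissibility of $\rho$ force $\hat u(\tau x)\geq\hat u(x)+1$, and a regularization based on the weak $1$-Poincar\'e inequality upgrades $\hat u$ to a genuine $\tau$-equivariant Lipschitz function whose pointwise Lipschitz constant is dominated by $\hat\rho$; reducing modulo $1$ then yields a degree-$1$ Lipschitz map $\phi:T\to\Ss^1$ with $\int_T\mathrm{Lip}(\phi)^p\,d\mu\leq C\int_T\rho^p\,d\mu$. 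For (b), I would apply the condenser duality of \cite{LohvansuuRajala2018} to $(\hat S_0,\tau\hat S_0)$ inside a suitable finite-sheeted truncation of $\hat T$ and use $\tau$-equivariance to identify the resulting condenser moduli with $\modd_p\Gamma_T$ and $\modd_{p^*}T$.

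The principal obstacle is the rigorous implementation of (b), since \cite{LohvansuuRajala2018} is stated for bounded open sets in a single compact PI space, whereas $\hat T$ is noncompact and the relevant condenser is the pair of two disjoint codimension-$1$ surfaces rather than continua. Ahlfors regularity and the weak $1$-Poincar\'e inequality descend automatically to $\hat T$ because the deck transformations are local isometries, so truncating $\hat T$ between $\tau^{-N}\hat S_0$ and $\tau^{N}\hat S_0$ produces a compact PI space to which the condenser duality applies; what remains is the careful check that in the limit $N\to\infty$ only winding-$1$ paths and separating surfaces in the correct homotopy class survive on the two sides, together with the usual care in passing between path moduli and separating-surface moduli on the cover. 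Once both directional inequalities are in place the limiting statement $\capp_pT=0\iff\modd_{p^*}T=\infty$ follows formally: the left-hand inequality forces $\modd_{p^*}T=\infty$ whenever $\capp_pT=0$, while the right-hand inequality forces $\capp_pT=0$ whenever $\modd_{p^*}T=\infty$.
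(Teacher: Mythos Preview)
Your treatment of the lower bound is essentially the paper's: coarea plus H\"older.  The problem is the upper bound.  Step (a) of your plan, the inequality $\capp_pT\leq C\,\modd_p\Gamma_T$, is in general \emph{false}.  This is precisely the content of the Freedman--He example mentioned in the introduction: on smooth solid tori the modulus of winding-number-$1$ paths can be made arbitrarily small while the surface modulus (and hence, by the duality you are trying to prove, the degree-$1$ capacity) stays bounded away from zero.  So there can be no constant $C$ depending only on the data with $\capp_pT\leq C\,\modd_p\Gamma_T$.  The place where your argument for (a) breaks is the ``regularization'' step: from an admissible $\rho$ you correctly get a function $\hat u$ on the cover with $\hat u(\tau x)\geq\hat u(x)+1$, but there is no mechanism that forces \emph{equality} here without paying in the upper gradient.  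If $\hat u(\tau x)-\hat u(x)$ oscillates over a fundamental domain, flattening it to the constant $1$ produces new gradient that is not controlled by $\hat\rho$, and the Poincar\'e inequality does not help because the defect is not a matter of local regularity but of the global holonomy of $\hat u$.  Consequently step (b) is moot; even if the condenser duality of \cite{LohvansuuRajala2018} could be made to apply on the cover, combining it with (a) would yield a false statement.

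The paper avoids path modulus entirely on the upper-bound side.  It first produces a minimizer $\rho_0$ for $\capp_pT$ (working in a convex class $\mathcal F$ of upper gradients of suitable $N^{1,p}_{\mathrm{loc}}$-functions on the universal cover, so that Mazur's lemma can be applied), together with the variational inequality $\capp_pT\leq\int_T\rho_0^{p-1}\rho\,d\mu$ for every competitor $\rho$.  The heart of the proof is then, for each surface $S\in\Gamma^*$, to construct a Lipschitz map $\psi:T\to\R/\Z$ of degree $1$ whose upper gradient is supported in a thin neighbourhood of $S$; this is done via the relative isoperimetric inequality and a $5r$-covering argument, and checking that $\psi$ really has degree $1$ requires a careful homotopy analysis on the cover.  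Plugging that upper gradient into the variational inequality gives $\capp_pT\leq C\int_S\rho_0^{p-1}\,d\Ha$, whence $\rho_0^{p-1}/\capp_pT$ is weakly admissible for $\modd_{p^*}T$ and the upper bound follows.  This construction is the genuinely new idea and cannot be replaced by a path-modulus argument.
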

A similar result, with $C=1$, was proved by Freedman and He \cite[Th. 2.5]{FreedmanHe1991} for smooth solid tori equipped with riemannian metrics.

Theorem \ref{thm:intro} is obtained from slightly more general statements. These are Theorems \ref{thm:lowerbound} and \ref{thm:upperbound}, and they correspond to the lower and upper bounds of the inequality in Theorem \ref{thm:intro}, respectively. The proof of the lower bound is essentially the same as the proof of the lower bound of \eqref{eq:introLR} found in \cite{LohvansuuRajala2018}. The main difficulty of the proof of Theorem \ref{thm:intro} is then the upper bound.

In \cite{LohvansuuRajala2018} the proof of the upper bound boils down to showing that given any path $\gamma$ that connects the two continua $E$ and $F$, and a neighborhood $N_\gamma$ of $|\gamma|$, there is a function admissible for the modulus of surfaces separating $E$ and $F$ that is supported in $N_\gamma$. This approach cannot be adopted in our current situation, since the paths have been replaced with Lipschitz maps. Instead, given any level set $S$ of a map of degree 1 and a neighborhood $N_S$ of $S$, we construct a Lipschitz map of degree 1 that is constant outside $N_S$. Note that this implies that the pointwise Lipschitz constant of this map can be assumed to be supported in $N_S$. This approach seems to be new. It can be seen as a dual to the one in \cite{LohvansuuRajala2018}, and as such it can in fact be used to reprove \eqref{eq:introLR}. 

Section \ref{section:results} contains some definitions and the main results. Theorems \ref{thm:lowerbound} and \ref{thm:upperbound} are proved in Sections \ref{section:lowerbound} and \ref{section:upperbound}, respectively.

\vskip 10pt
\noindent
{\bf Acknowledgement.} The author expresses his thanks to the anonymous referee, whose comments led to several improvements.



\section{Main results and definitions}\label{section:results}
For the rest of this text we fix a compact metric measure space $(T, d, \mu)$ that supports a weak $1$-Poincar\'e inequality. We also assume that $\mu$ is doubling. In order to apply the theory of covering spaces later on, we also have to assume that $T$ is semilocally simply connected (local and global path connectedness follow from the $1$-Poincar\'e inequality \cite[8.3.2]{HKST}).

We call a measure $\mu$ \emph{doubling} if it is Borel-regular and there exists a constant $C_\mu>1$, such that for every ball $B=B(x, r)$ with radius $r<\mathrm{diam}(T)$ 
\[
0<\mu(2B)<C_\mu\mu(B)<\infty.
\] 
Here $2B=B(x, 2r)$. 

Let $\mathscr{M}$ be a set of Borel-regular measures on $T$ and let $1\leq p<\infty$. We define the $p$-\emph{modulus} of $\mathscr{M}$ to be
\[
\modd_p\mathscr{M}=\inf\int_T\rho^p\ d\mu,
\]
where the infimum is taken over all Borel measurable functions $\rho: T\rightarrow [0, \infty]$ with 
\begin{equation}\label{eq:modulusehto}
\int_T\rho\ d\nu\geq 1
\end{equation}
for all $\nu\in \mathscr{M}$. Such functions are called \emph{admissible functions of $\mathscr{M}$}. If there are no admissible functions we define the modulus to be infinite. If $\rho$ is an admissible function for $\mathscr{M}-\mathscr{N}$ where $\mathscr{N}$ has zero $p$-modulus, we say that $\rho$ is \emph{$p$-weakly admissible} for $\mathscr{M}$. As a direct consequence of the definitions we see that the $p$-modulus does not change if the infimum is taken over only $p$-weakly admissible functions. If some property holds for all $\nu\in \mathscr{M}-\mathscr{N}$ we say that it holds for $p$-\emph{almost every} $\nu$ in $\mathscr{M}$.

Given a family $\Gamma$ of paths in $T$, the \emph{path $p$-modulus} of $\Gamma$ is denoted and defined like the modulus of a family of measures, but instead of \eqref{eq:modulusehto} it is required that
\[
\int_\gamma\rho\, ds\geq 1
\]
for every locally rectifiable path $\gamma\in\Gamma$.

A Borel function $\rho: T\rightarrow [0, \infty]$ is an \emph{upper gradient} of a function $u: T\rightarrow Y$, where $(Y, d_Y)$ is a metric space, if
\begin{equation}\label{eq:ug}
d_Y(u(\gamma(a)), u(\gamma(b)))\leq \int_\gamma\rho\ ds
\end{equation}
for all rectifiable paths $\gamma: [a, b]\rightarrow T$. The target $Y=[-\infty, \infty]$ is also allowed, but with an additional requirement that the right-hand side of \eqref{eq:ug} has to equal $\infty$ whenever either $|u(\gamma(a))|=\infty$ or $|u(\gamma(b))|=\infty$. If the family of paths for which \eqref{eq:ug} fails has zero $p$-modulus, we say that $\rho$ is a $p$-\emph{weak} upper gradient. The inequality \eqref{eq:ug} is called the \emph{upper gradient inequality} for the pair $(u, \rho)$ on $\gamma$. 

A $p$-integrable $p$-weak upper gradient $\rho$ of $u$ is $\emph{minimal}$ if for any other $p$-integrable $p$-weak upper gradient $\rho'$ of $u$ we have $\rho\leq\rho'$ $\mu$-almost everywhere. By \cite[Theorem 6.3.20]{HKST} minimal $p$-weak upper gradients exist whenever $p$-integrable upper gradients do.

The space $T$ is said to support a \emph{weak} $p$-\emph{Poincar\'e inequality} with constants $C_P$ and $\lambda_P$ if all balls in $T$ have positive and finite measure, and
\[
\dashint_B|u-u_B|\ d\mu\leq C_P\diam(B)\left(\dashint_{\lambda_PB}\rho^p\ d\mu\right)^\frac{1}{p}
\]
for all locally integrable functions $u$ and all upper gradients $\rho$ of $u$. Here 
\[
u_B=\dashint_Bu\, d\mu=\frac{1}{\mu(B)}\int_B u\, d\mu.
\]

In this paper we consider toroidal spaces, meaning that we assume the fundamental group of $T$ to be isomorphic to $\Z$ with respect to any basepoint. Fix a generator $[\alpha_{x_0}]\in\pi_1(T, x_0)$. We say that a loop $\gamma$ with basepoint $x\in T$ is a \emph{degree 1 loop} if it is loop-homotopic to $\alpha_x=\gamma_{xx_0}*\alpha_{x_0}*\overset{\leftarrow}{\gamma}_{xx_0}$ for some path $\gamma_{xx_0}$ that starts at $x$ and ends at $x_0$. It can be shown that the equivalence class $[\alpha_x]\in\pi_1(T, x)$ does not depend on the choice of $\gamma_{xx_0}$. 

For every continuous map $f: T\rightarrow \R/\Z$ there is a unique integer $\degg f$, called the \emph{degree} of $f$, so that for every $x\in T$ and every degree 1 loop $\gamma$ based at $x$ the push-forward $f_*\gamma=f\circ \gamma$ is loop-homotopic to $[f(x)]+\degg f\cdot\beta$, where $\beta: [0, 1]\rightarrow\R/\Z$ is the path $\beta(t)=[t]$. 

Now let $1< p < \infty$. We define the \emph{degree 1 $p$-capacity} of $T$ to be the number
\[
\capp_pT:=\inf\int_T\rho_f^p\, d\mu,
\]
where the infimum is taken over all Lipschitz maps $f: T\rightarrow \R/\Z$ with $\degg f=1$, and $\rho_f$ denotes the minimal $p$-weak upper gradient of $f$. Note that for Lipschitz maps the minimal upper gradient agrees almost everywhere with the pointwise Lipschitz constant $\mathrm{Lip}(f)$, see \cite{Cheeger1999} and \cite[13.5.1]{HKST}. We assume here and hereafter that $\R/\Z$ is equipped with the metric
\[
|[x]-[y]|=\inf_{a\in\Z}|x+a-y|,
\]
where the equivalence classes of $\R/\Z$ are denoted by brackets. Observe that with this metric $\R/\Z$ is isometric to a 1-dimensional euclidean sphere of total length 1 equipped with its intrinsic length metric.

Denote by $\Gamma^*$ the family of all level sets $\phi^{-1}[0]$ with finite codimension 1 spherical Hausdorff measure, where $\phi: T\rightarrow \R/\Z$ is a continuous map of degree 1. The \emph{codimension 1 spherical Hausdorff measure} is defined by 
\[
\Ha(A):=\sup_{\delta>0}\Ha_\delta(A),
\]
where
\[
\Ha_\delta(A):=\inf\sum_i\frac{\mu(B_i)}{r_i},
\]
and the infimum is taken over countable covers $\{B_i\}$ of $A$ by balls with radii $r_i\leq\delta$. By the Carath\'eodory construction $\Ha$ is a Borel-regular measure. A simple application of a coarea estimate, see Proposition \ref{prop:coarea}, shows that almost all level sets of Lipschitz maps have finite $\Ha$-measure. On the other hand, the relative isoperimetric inequality (Lemma \ref{lemma:riie}) shows that level sets of Lipschitz maps of degree 1 must have nonzero $\Ha$-measure.

As a dual counterpart to $\capp_pT$ we consider the surface modulus of $\Gamma^*$. We abbreviate
\begin{equation}\label{eq:pintamod}
\modd_{p^*}T=\modd_{p^*}\lbrace \Ha\mres S\ |\ S\in \Gamma^*\rbrace. 
\end{equation}

The definitions of $\capp_pT$ and $\modd_{p^*}T$ are rather trivial if Lipschitz maps of degree 1 do not exist. Although path-connected topological spaces with fundamental groups isomorphic to $\Z$ can fail to admit maps of nonzero degree, it seems to be unknown whether the existence of such a map is implied by the additional structure of $(T, d, \mu)$. To make life easier we simply assume that there exists at least one Lipschitz map $f: T\rightarrow \R/\Z$ of degree 1. 

Let us gather all of the assumptions into one place for clarity and future reference. 
\begin{assumptions}\label{ass}
The metric measure space $(T, d, \mu)$ is doubling and supports a weak $1$-Poincar\'e inequality. The space $T$ is compact and semilocally simply connected. The fundamental group of $T$ with respect to any basepoint is isomorphic to $\Z$ and there exists at least one Lipschitz map $\phi: T\rightarrow \R/\Z$ of degree 1. 
\end{assumptions}
With these assumptions our main results are the following
\begin{theorem}\label{thm:lowerbound}
Let $1<p<\infty$. If $\capp_pT >0$, then 
\[
\frac{1}{C}\leq(\capp_pT)^\frac{1}{p}(\modd_{p^*}T)^\frac{1}{p^*}, 
\]
where the constant $C$ depends only on the data of $T$. If $\capp_pT =0$, then ${\modd_{p^*}T=\infty}$.
\end{theorem}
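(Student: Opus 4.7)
The plan is to adapt the coarea-and-Hölder argument used for the lower bound of \eqref{eq:introLR} in \cite{LohvansuuRajala2018}, with Lipschitz maps of degree 1 taking the role of the cut functions of the condenser setting.

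First I would fix any Lipschitz map $f : T \rightarrow \R/\Z$ of degree 1 and any Borel function $\rho$ admissible for $\modd_{p^*}T$. For Lebesgue-almost every $t \in \R/\Z$, the coarea estimate (Proposition \ref{prop:coarea}) guarantees that the level set $S_t := f^{-1}(t)$ has finite codimension 1 spherical Hausdorff measure. For each such $t$ the translated map $x \mapsto f(x) - t$ is continuous and of degree 1 with zero set $S_t$, hence $S_t \in \Gamma^*$, and admissibility of $\rho$ yields $\int_{S_t} \rho \, d\Ha \geq 1$.

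Integrating this inequality in $t$ over $\R/\Z$ and applying the coarea estimate a second time, now to the integrand $\rho$, I would get
\[
1 \leq \int_{\R/\Z} \int_{S_t} \rho \, d\Ha \, dt \leq C \int_T \rho \cdot \rho_f \, d\mu,
\]
where the constant $C$ depends only on the data of $T$. Hölder's inequality then gives
\[
1 \leq C \left(\int_T \rho^{p^*} \, d\mu\right)^{1/p^*} \left(\int_T \rho_f^p \, d\mu\right)^{1/p}.
\]
Taking the infimum over admissible $\rho$ (which coincides with the infimum over $p^*$-weakly admissible ones) and then over Lipschitz degree 1 maps $f$ yields the desired bound $1/C \leq (\modd_{p^*}T)^{1/p^*}(\capp_pT)^{1/p}$. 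In particular, if $\capp_pT = 0$, the same chain of inequalities forces $1 \leq 0$ for any admissible $\rho$, so no admissible $\rho$ can exist and $\modd_{p^*}T = \infty$.

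The main technical obstacle I anticipate is the coarea inequality itself: in the metric setting it holds only with a constant depending on the data, and one must work directly with the codimension 1 spherical Hausdorff measure $\Ha$ rather than with a more natural perimeter measure. This is, however, presumably encoded in Proposition \ref{prop:coarea}, so after invoking it the remainder of the argument is a direct translation of the condenser case in \cite{LohvansuuRajala2018}.
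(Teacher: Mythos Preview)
Your proposal is correct and follows essentially the same coarea-plus-H\"older argument as the paper's own proof; the only cosmetic difference is that you explicitly spell out why almost every level set $S_t$ belongs to $\Gamma^*$ via the translation $x\mapsto f(x)-t$, whereas the paper leaves this implicit. One small technical point: Proposition~\ref{prop:coarea} is stated for a genuine upper gradient $\rho$, not the minimal $p$-weak upper gradient $\rho_f$, so in the displayed estimate you should use an arbitrary $p$-integrable upper gradient and pass to the infimum afterwards (as the paper does); this does not affect the argument.
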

\begin{theorem}\label{thm:upperbound}
Let $1<p<\infty$. If $\modd_{p^*}\Gamma^*< \infty$, then 
\[
(\capp_pT)^\frac{1}{p}(\modd_{p^*}T)^\frac{1}{p^*}\leq C, 
\]
where the constant $C$ depends only on the data of $T$. If $\modd_{p^*}T= \infty$, then ${\capp_pT=0}$.
\end{theorem}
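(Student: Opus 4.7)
The plan is to deduce both claims from the new construction advertised in the introduction: for every continuous map $\phi:T\to\R/\Z$ of degree~$1$ with $\Ha(\phi^{-1}[0])<\infty$ and every open neighborhood $N\supset\phi^{-1}[0]$, one produces a Lipschitz map $\psi:T\to\R/\Z$ of degree~$1$ whose pointwise Lipschitz constant is supported in $N$. The natural construction lifts $\phi$ to the universal cyclic cover $\tilde T$, sets $\tilde\psi=n$ on the component of $\tilde T\setminus\pi^{-1}(N)$ that approaches $\tilde\phi^{-1}(n)$ from below, and extends $\tilde\psi$ across each lift of $N$ via a McShane-type Lipschitz extension, so that $\mathrm{Lip}(\psi)$ is controlled by the reciprocal of the in-tube width of $N$.

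For the main inequality, assume $\modd_{p^*}T<\infty$, fix admissible $\rho$ with $\int_T\rho^{p^*}\,d\mu$ close to $M:=\modd_{p^*}T$, and let $\phi_0$ be any Lipschitz map of degree~$1$. Writing $S_t=\phi_0^{-1}[t]$, admissibility gives $\int_{S_t}\rho\dhaus\geq 1$ for $p^*$-a.e.\ $t$, while the coarea estimate (Proposition~\ref{prop:coarea}) combined with H\"older yields
\[
\int_0^1\!\int_{S_t}\rho\dhaus\,dt\leq\int_T\rho\,\mathrm{Lip}(\phi_0)\,d\mu\leq M^{1/p^*}\Bigl(\int_T\mathrm{Lip}(\phi_0)^p\,d\mu\Bigr)^{1/p}\!.
\]
Hence there is $t_0$ with $S_{t_0}\in\Gamma^*$ and $\int_{S_{t_0}}\rho\dhaus\leq M^{1/p^*}(\int_T\mathrm{Lip}(\phi_0)^p\,d\mu)^{1/p}$. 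Apply the construction with $S=S_{t_0}$ and with $N$ a tubular neighborhood whose local half-width at $y\in S_{t_0}$ is $w(y)\sim M\rho(y)^{-(p^*-1)}$: the resulting $\psi$ satisfies $\mathrm{Lip}(\psi)\lesssim 1/w\sim\rho^{p^*-1}/M$ on $N$, and an Ahlfors-regular coarea estimate in the tube gives, using $(p^*-1)(p-1)=1$,
\[
\int_T\mathrm{Lip}(\psi)^p\,d\mu\lesssim\int_{S_{t_0}}w^{1-p}\dhaus=M^{1-p}\!\int_{S_{t_0}}\rho\dhaus.
\]
Chaining the two displays and taking the infimum over the choice of $\phi_0$ on the right side produces $\capp_pT\leq CM^{1-p+1/p^*}(\capp_pT)^{1/p}$; since $1-p+1/p^*=-(p-1)^2/p$, solving for $\capp_pT$ yields $\capp_pT\leq C'M^{-(p-1)}$, i.e., $(\capp_pT)^{1/p}(\modd_{p^*}T)^{1/p^*}\leq C''$.

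The degenerate case $\modd_{p^*}T=\infty$ is treated in parallel: for each $n\in\N$, the non-admissibility of the constant function $\rho\equiv n^{1/p^*}$ provides a level set $S_n\in\Gamma^*$ of some continuous degree-$1$ map with $\Ha(S_n)<n^{-1/p^*}$, and applying the key construction to $S_n$ with a neighborhood of uniformly bounded width yields Lipschitz maps of degree~$1$ with vanishing $p$-energy, forcing $\capp_pT=0$.

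The hard part is the key construction itself in a potentially non-Riemannian setting. Building a variable-width tubular neighborhood of the codimension-$1$ set $S$, extending $\tilde\psi$ across it Lipschitzly, and obtaining the pointwise bound $\mathrm{Lip}(\psi)\lesssim 1/w$ all require substantive use of the $1$-Poincar\'e inequality, Ahlfors regularity, and the relative isoperimetric inequality (Lemma~\ref{lemma:riie}); these ingredients also underpin the coarea identity that converts integrals over $N$ into weighted integrals over $S$, which is what calibrates the choice $w\sim M\rho^{-(p^*-1)}$ above.
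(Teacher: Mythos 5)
Your overall plan is genuinely different from the paper's. The paper first constructs an extremal function $\rho_0$ for $\capp_pT$ (via the universal cover, Newtonian spaces, a density theorem of Bj\"orn--Bj\"orn, and Mazur's lemma), then uses the variation inequality $\capp_pT\le\int_T\rho\rho_0^{p-1}\,d\mu$ together with Fuglede's lemma to show that a multiple of $\rho_0^{p-1}$ is weakly admissible for $\modd_{p^*}T$. You instead bypass the minimizer entirely by averaging over level sets $S_t$ of an arbitrary competitor $\phi_0$, choosing a good $t_0$, and feeding that single level set back into the construction of a competitor for $\capp_pT$; the degenerate case would then follow from the main inequality by contrapositive, exactly as in the paper. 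If the construction you invoke were available, this would indeed be a shorter route, and the algebra in your bootstrapping step is correct (note $1-p+1/p^*=-(p-1)^2/p$). Two side remarks: you write ``for $p^*$-a.e.\ $t$'' where you mean Lebesgue a.e.\ $t$ (the coarea estimate shows $S_t\in\Gamma^*$ for a.e.\ $t$), and your final constant picks up a power $1/(p-1)$ when solving for $\capp_pT$, so it depends on $p$, whereas the paper's constant does not.

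The genuine gap is in the ``key construction'' itself, and it is not a detail that can be deferred. You propose a variable-width tubular neighborhood of $S_{t_0}$ with half-width $w(y)\sim M\rho(y)^{-(p^*-1)}$, a McShane extension across its lift with $\mathrm{Lip}(\psi)\lesssim 1/w$ pointwise, and a tube coarea of the form $\int_N w^{-p}\,d\mu\lesssim\int_{S_{t_0}} w^{1-p}\,d\Ha$. None of these ingredients is available in a metric measure space supporting only doubling and a $1$-Poincar\'e inequality. There is no well-posed notion of a tube with prescribed variable width around a codimension-one level set; a McShane extension across such a region does not come with a pointwise Lipschitz bound proportional to the local width; and the displayed tube coarea is not a consequence of the relative isoperimetric inequality (which gives a lower bound on $\Ha(S\cap\lambda B)$ in terms of $\mu(B)/r$, i.e.\ it points in the opposite direction). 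The paper avoids all of this by working only with balls of a \emph{uniform} small radius $1/n$ chosen via the relative isoperimetric inequality plus the $5r$-covering theorem, defining $\psi$ as a path-integral potential of $\rho=n\sum_i\chi_{5B_i}$, and replacing the tube coarea by a Hardy--Littlewood maximal-function bound combined with Fuglede's lemma. That mechanism is precisely what makes the argument go through without any Riemannian-style Fubini for tubes, and your proposal does not reproduce it.

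The direct argument you sketch for the degenerate case has a related gap: from $\Ha(S_n)\to 0$ you infer that a Lipschitz degree-$1$ map supported near $S_n$ has vanishing $p$-energy, but that implicitly requires comparing the measure of a fixed-width neighborhood of $S_n$ to its Hausdorff measure (a Minkowski-content estimate), which is false in this generality. In any case it is unnecessary: once the main inequality $(\capp_pT)^{1/p}(\modd_{p^*}T)^{1/p^*}\le C$ is established under the hypothesis $\modd_{p^*}T<\infty$, the implication ``$\modd_{p^*}T=\infty\Rightarrow\capp_pT=0$'' follows by contrapositive, as the paper does.
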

We say that a constant $C>0$ depends only on the data of $T$, denoted $C=C(T)$, if it depends only on the constants $C_\mu, C_P$ and $\lambda_P$ appearing in the definitions of doubling measures and Poincar\'e inequalities. The same symbol $C$ will be used for various different constants. 

If we let the metric measure space $(T, d, \mu)$ be as in Theorem \ref{thm:intro}, it satisfies Assumptions \ref{ass}. The existence of Lipschitz maps of degree 1 follows from Proposition \ref{prop:degree}. In Ahlfors $q$-regular spaces the $\Ha$-measure is comparable to the $(q-1)$-dimensional Hausdorff measure, so the surface moduli defined using either measure are comparable. Therefore Theorem \ref{thm:intro} is just a combination of Theorems \ref{thm:lowerbound} and \ref{thm:upperbound}.

Note that the conclusions in Theorems \ref{thm:lowerbound} and \ref{thm:upperbound} are invariant under biLipschitz changes of metrics. Also recall that a complete metric space supporting a Poincar\'e inequality is $C$-quasiconvex for some $C=C(T)$. This means that the change of metrics $(T, d)\rightarrow (T, d')$ is $C$-biLipschitz, when $d'$ is the intrinsic length metric induced by $d$. It follows that we may assume without any loss of generality that $d$ is the length metric. It is then implied by compactness that $(T, d)$ is in fact geodesic. Note that in geodesic spaces we can choose $\lambda_P=1$. For these facts see Theorem 8.3.2 and Remark 9.1.19 in \cite{HKST}.

\section{Proof of Theorem \ref{thm:lowerbound}}\label{section:lowerbound}
The proof of Theorem \ref{thm:lowerbound} is exactly the same as the proof of Theorem 3.1 in \cite{LohvansuuRajala2018}, but with a different coarea estimate.
\begin{proposition}\label{prop:coarea}
Let $u: T\rightarrow \R/\Z$ be Lipschitz and let $\rho$ be a $p$-integrable upper gradient of $u$ in $T$. Let $g: T\rightarrow [0, \infty]$ be a $p^*$-integrable Borel function. Then
\begin{equation}\label{eq:coarea}
\int_{\R/\Z}^*\int_{u^{-1}(t)}g\ d\Ha dt\leq C\int_{T}g\rho\ d\mu
\end{equation}
for some $C=C(T)$.
\end{proposition}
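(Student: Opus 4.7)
The plan is to carry out a coarea-via-covering argument tailored to the defining covers of $\Ha$. First I reduce the statement. By approximating the nonnegative Borel function $g$ from below by simple functions and invoking monotone convergence, it suffices to prove
\[
\int^{*}_{\R/\Z}\Ha(u^{-1}(t)\cap A)\,dt \le C\int_A \rho\,d\mu
\]
for every Borel $A\subset T$. Since $u$ is Lipschitz, its minimal $p$-weak upper gradient equals $\mathrm{Lip}(u)$ $\mu$-almost everywhere and every $p$-weak upper gradient dominates the minimal one $\mu$-a.e., so it suffices to take $\rho=\mathrm{Lip}(u)$. The target $\R/\Z$ causes no real trouble because the asserted inequality is local in $t$ and small arcs of $\R/\Z$ are isometric to intervals in $\R$.

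Fix $\delta,\epsilon>0$. Applying Lebesgue differentiation to the bounded function $\mathrm{Lip}(u)$ (legal since $\mu$ is doubling) together with the $\limsup$-definition of the pointwise Lipschitz constant, at $\mu$-almost every $x\in T$ I can choose a radius $r_x\in(0,\delta]$ such that
\[
\operatorname{osc}_{B(x,r)}u\le 2r(\mathrm{Lip}(u)(x)+\epsilon)\quad\text{and}\quad\mathrm{Lip}(u)(x)\le\dashint_{B(x,r)}\mathrm{Lip}(u)\,d\mu+\epsilon
\]
hold for every $r\le r_x$. Using the $5r$-covering lemma I extract a disjoint countable subcollection $\{B_i=B(x_i,r_i)\}$ whose enlargements $\{5B_i\}$ cover $A$ modulo a $\mu$-null set; doubling ensures the enlarged family has multiplicity bounded by a constant $N=N(C_\mu)$. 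The null leftover is absorbed using the cruder bound $|u(B)|\le 2r\|\mathrm{Lip}(u)\|_{\infty}$, which contributes zero in the limit.

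For each $t\in\R/\Z$, the balls with $t\in u(5B_i)$ cover $u^{-1}(t)\cap A$ modulo a null set, so by the definition of the premeasure $\Ha_{5\delta}$,
\[
\Ha_{5\delta}(u^{-1}(t)\cap A)\le\sum_{i:\,t\in u(5B_i)}\frac{\mu(5B_i)}{5r_i}.
\]
Integrating in $t$ and applying Fubini together with $|u(5B_i)|\le 10 r_i(\mathrm{Lip}(u)(x_i)+\epsilon)$, the doubling comparison $\mu(5B_i)\le C\mu(B_i)$, and the Lebesgue-point inequality, I obtain
\[
\int^{*}\Ha_{5\delta}(u^{-1}(t)\cap A)\,dt\le C\sum_i(\mathrm{Lip}(u)(x_i)+\epsilon)\mu(B_i)\le C\sum_i\int_{B_i}(\mathrm{Lip}(u)+2\epsilon)\,d\mu,
\]
which by disjointness of $\{B_i\}$ is controlled by $C\int_{U}(\mathrm{Lip}(u)+2\epsilon)\,d\mu$ for an open neighbourhood $U\supset A$ with $\mu(U\setminus A)$ as small as we please. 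Letting $\delta,\epsilon\to 0$ and $U\downarrow A$ (via outer regularity of $\mu$) yields the claim. The main obstacle is keeping the right-hand side in terms of $\mathrm{Lip}(u)$ pointwise rather than a crude global Lipschitz constant; this is accomplished by the joint Lipschitz/Lebesgue-point selection of radii above, and this is the sole place where doubling enters the geometric estimate.
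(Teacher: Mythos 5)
Your argument is a self-contained Eilenberg-type covering proof, whereas the paper simply invokes \cite[Prop.~4.1]{LohvansuuRajala2018} locally (the cited result is the same coarea estimate for real-valued Lipschitz functions, proved there by essentially the same covering mechanism). Your route is sound in outline, but a few points need tightening.

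First, the remark that ``doubling ensures the enlarged family $\{5B_i\}$ has multiplicity bounded by a constant $N(C_\mu)$'' is false in general: disjoint balls of wildly different radii can have dilations with unbounded overlap at a single point (bounded overlap is only guaranteed scale-by-scale). Fortunately you never use this; the final step uses disjointness of the $B_i$ themselves, so simply delete the multiplicity claim. Second, you need the oscillation bound on the dilated ball $5B_i$, i.e.\ $\operatorname{osc}_{5B_i}u \le 10 r_i(\mathrm{Lip}(u)(x_i)+\epsilon)$, but your selection only guarantees the estimate for radii $\le r_{x_i}$. Fix this by feeding the $5r$-covering lemma the family of balls $B(x,r)$ with $r\le r_x/5$; then $5B_i$ has radius $\le r_{x_i}$ and the estimate applies, while $\bigcup 5B_i$ still covers the set of Lebesgue points of $\mathrm{Lip}(u)$ in $A$. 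Third, the leftover $\mu$-null set $N$ of non-Lebesgue points requires its own coarea estimate ($\Ha(u^{-1}(t)\cap N)=0$ for a.e.\ $t$); you gesture at this with the crude $|u(B)|\le 2r\|\mathrm{Lip}(u)\|_\infty$ bound, and it does work (cover $N$ by balls of arbitrarily small total $\mu$-mass and radii $\le\delta$, integrate, send the mass to zero, then $\delta\to 0$), but it should be stated as a separate step rather than folded into the main covering. Fourth, the order of limits matters: for fixed $\epsilon$ and $U$ the premeasure bound holds for every $\delta$, so send $\delta\to 0$ first (monotone convergence in the premeasure), then $\epsilon\to 0$ and $U\downarrow A$. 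Finally, rather than appealing to an open $U\supset A$ with small excess measure and arguing that $\bigcup B_i\subset U$ for $\delta$ small (which is delicate for general Borel $A$ since $\overline{A}\setminus A$ may have positive measure), it is cleaner to prove the estimate for \emph{open} $A$ and then pass to arbitrary Borel $A$ by outer regularity of $\mu$, using monotonicity of $\Ha(u^{-1}(t)\cap\cdot)$ on the left. With these repairs the proof goes through and gives a correct, elementary alternative to the paper's citation.
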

Proposition \ref{prop:coarea} follows by applying \cite[Prop. 4.1]{LohvansuuRajala2018} in small enough balls.

\begin{proof}[Proof of Theorem \ref{thm:lowerbound}]
First assume that $\capp_{p}T>0$. If $\modd_{p^*}T=\infty$, there is nothing to prove. Otherwise let $g\in L^{p^*}(T)$ be admissible for $\modd_{p^*}T$. Let $u: T\rightarrow \R/\Z$ be Lipschitz with degree 1 and note that $u$ must be surjective. Let $\rho$ be an upper gradient of $u$. We may assume that $\rho$ is $p$-integrable. Note that by (\ref{eq:coarea}) $\Ha(u^{-1}(t))<\infty$ for almost every $t$. Proposition \ref{prop:coarea} and H\"older's inequality give
\[
1\leq \int_{\R/\Z}^*\int_{u^{-1}(t)}g\ d\Ha dt\leq C\int_Tg\rho\ d\mu\leq C\left(\int_Tg^{p^*}d\mu \right)^\frac{1}{p^*}\left(\int_T\rho^{p}\,d\mu \right)^\frac{1}{p}.
\]
The lower bound follows by taking infima over admissible functions $g$ and $\rho$. The same argument would lead to a contradiction if $\modd_{p^*}T$ was finite when $\capp_pT=0$.
\end{proof}


\section{Proof of Theorem \ref{thm:upperbound}}\label{section:upperbound}
Theorem \ref{thm:upperbound} follows, once we have shown that there is a non-negative Borel function $\rho_0$ defined on $T$, such that
\[
\capp_pT=\int_T\rho_0^p\, d\mu,
\]
and that
\begin{equation}\label{eq:upperbound}
\capp_pT\leq C(T)\int_{S}\mathcal{M}_{C(T)/n}(\rho_0^{p-1})\, d\Ha
\end{equation} 
for all $S\in\Gamma^*$ and all large enough $n$, depending on $S$. Here $\mathcal M_r$ for $r>0$ denotes the restricted Hardy-Littlewood maximal operator, see \cite[Chapter 3.5]{HKST} for its definition and basic properties. Indeed, letting $n\rightarrow\infty$ and applying the general Fuglede's lemma \cite[Theorem 3]{Fuglede1957} we find that
\begin{equation}\label{eq:kakka1}
\capp_pT\leq C(T)\int_S\rho_0^{p-1}\, d\Ha
\end{equation}
for $\modd_{p^*}$-almost every $S$. Now suppose $\modd_{p^*}T<\infty$. If $\capp_pT=0$, there is nothing to prove. Otherwise it follows from \eqref{eq:kakka1} that the function
\[
\frac{C(T)}{\capp_pT}\rho_0^{p-1}
\]
is weakly admissible for $\modd_{p^*}T$. Thus
\[
(\modd_{p^*}T)^{1/p^*}\leq \frac{C(T)}{\capp_pT}\left(\int_T\rho_0^{p^*(p-1)}\, d\mu\right)^{1/p^*}=C(T)(\capp_pT)^{-1/p}.
\]
The same calculation shows that $\modd_{p^*}T$ must be finite if $\capp_pT$ is nonzero. This proves Theorem \ref{thm:upperbound}. The rest of this section is focused on finding $\rho_0$ and proving \eqref{eq:upperbound}.

Let us begin by constructing $\rho_0$. We would like to apply the usual method of constructing minimizers for capacities or moduli. This method would consist of picking a minimizing sequence $(\phi_i)_i$ of Lipschitz maps of degree 1 and their upper gradients $(\rho_i)_i$, applying weak compactness properties of $L^p$-spaces and Mazur's lemma to find a subsequence of convex combinations of $\rho_i$ that converges strongly to some limit $\rho_0$, and finally showing that $\rho_0$ is an upper gradient of a Lipschitz map of degree 1. The obvious flaw with this method is that it is not clear whether the proposed minimizer $\rho_0$ or the convex combinations of the functions $\rho_i$ are upper gradients of Lipschitz maps of degree 1. 

To fix this, we replace the collection of upper gradients of degree 1 Lipschitz maps by a slightly larger collection $\mathcal{F}$ and show in Proposition \ref{prop:toinenkapasiteetti} that the capacity does not change if we take the infimum over functions of $\mathcal{F}$ instead. The collection $\mathcal{F}$ is defined using the universal cover $(\tilde T, \pi)$ of $T$, and consists of those non-negative Borel functions $\rho$ on $T$ for which the function $\rho\circ\pi$ is an upper gradient of a Newtonian map, which satisfies an analogue of the degree 1 -property. See Subsection \ref{subsec:minimizers} for the definition of Newtonian maps. Once we have set the proper definition of $\mathcal{F}$, it is easy to see that it is convex, and by applying the proofs of existing compactness results on Newtonian spaces we show in Proposition \ref{prop:minimizer} that the limit $\rho_0$ is a member of $\mathcal{F}$ as well.  
\subsection{Universal cover and lifts}We denote the universal cover of $T$ by $(\tilde T, \pi)$. The metric $\tilde d$ on $\tilde T$ is defined as the path metric induced by pulling back the length functional of $T$ with $\pi$. This means that given points $\tilde x, \tilde y\in\tilde T$ we define
\[
\tilde d(\tilde x, \tilde y)=\inf_\gamma \ell(\pi\circ\gamma),
\]
where the infimum is taken over all paths in $\tilde T$ that connect $\tilde x$ and $\tilde y$, and $\ell(\pi\circ\gamma)$ is the length of the path $\pi\circ\gamma$. With this metric $\pi$ becomes a local isometry.

We equip $\tilde T$ with the Borel-regular measure $\tilde\mu$ that satisfies
\[
\tilde\mu(A):=\int_{\pi(A)}N(x, \pi, A)\, d\mu(x),
\]
for all Borel sets $A\subset \tilde T$. Here $N(x, \pi, A)$ denotes the cardinality of $\pi^{-1}(x)\cap A$. The area formula
\[
\int_{\tilde T}f\, d\tilde\mu=\int_T\sum_{y\in \pi^{-1}(x)}f(y)\, d\mu(x)
\]
holds for every integrable Borel-function $f$.

Denote by $\tau: \tilde T\rightarrow\tilde T$ the unique deck transform that satisfies
\[
\tau(\tilde\gamma(0))=\tilde\gamma(1),
\]
for all lifts $\tilde\gamma: [0, 1]\rightarrow \tilde T$ of all degree 1 loops $\gamma: [0 ,1]\rightarrow T$. With the additional metric and measure theoretic structure the classic lifting theorems imply the following.
\begin{lemma}\label{lemma:lipschitzlift}
Suppose $f: T\rightarrow \R/\Z$ is a Lipschitz map of degree $1$ and let $\rho$ be one of its upper gradients. There exists a function $\tilde f: \tilde T\rightarrow \R$, called the \emph{lift} of $f$, that satisfies the following properties.
\begin{enumerate}
\item $[\tilde f]=f\circ\pi$. In particular $\tilde f$ is locally Lipschitz
\item $\rho\circ\pi$ is an upper gradient of $\tilde f$
\item $\tilde f\circ\tau - \tilde f=1$.
\end{enumerate}
Moreover, if $\tilde f'$ is another lift that satisfies the properties above, then there is a $k\in\Z$ such that $\tilde f'=\tilde f\circ\tau^k=\tilde f+k$.
\end{lemma}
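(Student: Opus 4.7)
The plan is to reduce to the classical covering-space lifting theorem and then verify that the metric/upper-gradient structure is preserved along the way. The universal cover $\tilde T$ is simply connected, and the exponential map $\exp:\R\to\R/\Z$, $t\mapsto[t]$, is a covering map. Hence the continuous map $f\circ\pi:\tilde T\to\R/\Z$ admits a unique continuous lift $\tilde f:\tilde T\to\R$ once we fix its value at a single basepoint; this immediately yields the identity $[\tilde f]=f\circ\pi$ in condition (1).

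For local Lipschitzness in (1), I would use that $\pi$ is a local isometry: around every $\tilde x\in\tilde T$ there is a ball $B$ on which $\pi|_B$ is an isometry onto its image. Shrinking $B$ if necessary using continuity of $\tilde f$, one arranges that $\tilde f(B)$ lies in an interval of length $<1/2$ so that on $B$ the lift $\tilde f$ coincides with a local isometric inverse of $\exp$ composed with $f\circ\pi$, and Lipschitzness of $f$ transfers directly. Condition (2) is a telescoping argument: given a rectifiable $\gamma:[a,b]\to\tilde T$, compactness and continuity of $\tilde f\circ\gamma$ yield a partition $a=t_0<t_1<\cdots<t_N=b$ such that on each $\gamma|_{[t_{i-1},t_i]}$ the image lies in a ball where $\pi$ is an isometry and the oscillation of $\tilde f\circ\gamma$ is less than $1/2$. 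On each such subpath the $\R/\Z$--distance between the endpoints of $f\circ\pi\circ\gamma$ equals $|\tilde f(\gamma(t_i))-\tilde f(\gamma(t_{i-1}))|$, so the upper gradient inequality for $(f,\rho)$ applied to $\pi\circ\gamma|_{[t_{i-1},t_i]}$ gives
\[
|\tilde f(\gamma(t_i))-\tilde f(\gamma(t_{i-1}))|\leq \int_{\pi\circ\gamma|_{[t_{i-1},t_i]}}\rho\,ds=\int_{\gamma|_{[t_{i-1},t_i]}}\rho\circ\pi\,ds,
\]
using that $\pi$ is a local isometry to equate the two line integrals. Summing over $i$ and applying the triangle inequality yields the upper gradient inequality for $(\tilde f,\rho\circ\pi)$ on $\gamma$.

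For (3), fix $\tilde x\in\tilde T$ and set $x=\pi(\tilde x)$. By definition of $\tau$, any degree $1$ loop $\alpha_x$ based at $x$ lifts to a path $\tilde\alpha:[0,1]\to\tilde T$ with $\tilde\alpha(0)=\tilde x$ and $\tilde\alpha(1)=\tau(\tilde x)$. The loop $f\circ\alpha_x$ is loop-homotopic in $\R/\Z$ to $[f(x)]+\deg(f)\cdot\beta=[f(x)]+\beta$, hence its continuous lift to $\R$ starting from $\tilde f(\tilde x)$, namely $\tilde f\circ\tilde\alpha$, has total change exactly $1$. Evaluating at the endpoint yields $\tilde f(\tau\tilde x)-\tilde f(\tilde x)=1$. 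For uniqueness, any other lift $\tilde f'$ satisfying (1) differs from $\tilde f$ by an integer-valued continuous function on the connected space $\tilde T$, hence by a constant integer $k$, and iterating (3) gives $\tilde f+k=\tilde f\circ\tau^k$.

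The only delicate point is (2), where one must avoid the collapse of $\R/\Z$--distance caused by winding; the main obstacle to overcome is making sure the partition of $[a,b]$ is fine enough that on each piece the $\R/\Z$--distance between the endpoints of $f\circ\pi\circ\gamma$ really coincides with the $\R$--distance between the corresponding values of $\tilde f$. Once this is handled via the oscillation bound on $\tilde f\circ\gamma$ and the Lebesgue number lemma applied to the open cover of $|\gamma|$ by isometric neighborhoods of $\pi$, everything else falls out of classical lifting theory.
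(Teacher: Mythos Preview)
Your proposal is correct and follows the same approach as the paper: the paper states that the lemma is a consequence of ``the classic lifting theorems'' together with the line-integral identity $\int_\gamma\rho\circ\pi\,ds=\int_{\pi\circ\gamma}\rho\,ds$, and you carry out precisely this program. Your treatment of (2) is in fact more careful than the paper's one-line hint, since the identity alone does not immediately give the upper gradient inequality (the $\R/\Z$-distance is dominated by, not equal to, the $\R$-distance); the partition-and-telescope argument you supply to handle this is exactly the device the paper uses explicitly in the proof of the converse Lemma~\ref{lemma:projektiolemma}.
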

Claim (2) follows from the identity
\[
\int_\gamma\rho\circ\pi\, ds=\int_{\pi\circ\gamma}\rho\, ds,
\]
which holds for every rectifiable path $\gamma$ in $\tilde T$.

Conversely, we have the following. 
\begin{lemma}\label{lemma:projektiolemma}
For every locally Lipschitz $g:\tilde T\rightarrow \R$ with $g\circ\tau-g=1$ there is a Lipschitz map $f: T\rightarrow \R/\Z$ of degree 1, that satisfies $[g]=f\circ\pi$. Moreover, if $\rho_f$ is the minimal $p$-weak upper gradient of $f$ in $T$, then $\rho_f\circ\pi$ is the minimal $p$-weak upper gradient of $g$ in $\tilde T$.
\end{lemma}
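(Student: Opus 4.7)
My plan is to construct $f$ from $g$ via the quotient $[\cdot]:\R\to\R/\Z$, then exploit the local isometry property of $\pi$ to transfer upper gradient information between the two spaces. Specifically, I would define $f(x) := [g(\tilde x)]$ for any $\tilde x \in \pi^{-1}(x)$: since iterating the hypothesis gives $g\circ\tau^k - g = k \in \Z$, this is independent of the chosen lift and satisfies $f \circ \pi = [g]$. As $\pi$ is a local isometry and $g$ is locally Lipschitz, $f$ is locally Lipschitz, and compactness of $T$ together with its geodesic structure (noted at the end of Section \ref{section:results}) upgrade this to a global Lipschitz bound. For $\degg f = 1$, lift any degree 1 loop $\gamma$ based at $x$ to a path $\tilde\gamma$ in $\tilde T$ starting at some $\tilde x \in \pi^{-1}(x)$; by the defining property of $\tau$, $\tilde\gamma$ terminates at $\tau(\tilde x)$. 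Then $t \mapsto g(\tilde\gamma(t))$ is a continuous lift of $f \circ \gamma$ to $\R$ running from $g(\tilde x)$ to $g(\tilde x) + 1$, so $f\circ\gamma$ is loop-homotopic in $\R/\Z$ to $[f(x)] + \beta$, matching the definition of degree 1.

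For the upper gradient statement, I first show that $\rho_f \circ \pi$ is a $p$-weak upper gradient of $g$. Let $\gamma:[a,b]\to\tilde T$ be rectifiable, and suppose $(f,\rho_f)$ satisfies its upper gradient inequality along $\pi\circ\gamma$. By uniform continuity of $g$ on the compact trace of $\gamma$, partition $[a,b]$ into subintervals $[t_{i-1},t_i]$ on which $g\circ\gamma$ oscillates by less than $1/2$. On each such piece $|g(\gamma(t_i)) - g(\gamma(t_{i-1}))| = |f(\pi\gamma(t_i)) - f(\pi\gamma(t_{i-1}))|_{\R/\Z}$, because $[\cdot]$ is an isometry on intervals of length $<1/2$. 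Summing via the triangle inequality and the upper gradient inequality for $(f,\rho_f)$ gives $|g(\gamma(b)) - g(\gamma(a))| \leq \int_{\pi\gamma}\rho_f\, ds = \int_\gamma \rho_f\circ\pi\, ds$. The exceptional family of paths in $\tilde T$ consists precisely of lifts of paths in the exceptional family in $T$, and a local argument using that $\pi$ is a local isometry (hence preserves length and hence admissibility for path moduli) shows it has $p$-modulus zero.

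For minimality, let $\tilde\rho$ be any $p$-weak upper gradient of $g$. Cover $T$ by countably many evenly covered balls $U$ small enough that $f|_U$ admits a single-valued $\R$-lift; write $\pi^{-1}(U) = \bigsqcup_k U_k$ with each $\pi|_{U_k}: U_k \to U$ an isometry, so that each $g|_{U_k}$ is such an $\R$-lift of $f|_U$. For each $k$, pushing forward gives $\rho_{(k)} := \tilde\rho\circ(\pi|_{U_k})^{-1}$, which is a $p$-weak upper gradient of $f|_U$, since the upper gradient inequality for $g$ along a path lifted into $U_k$ transfers via $[\cdot]$ and $\pi|_{U_k}$ preserves path moduli. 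Minimality of $\rho_f$ on $U$ yields $\rho_f \leq \rho_{(k)}$ a.e.\ on $U$, which pulls back to $\rho_f \circ \pi \leq \tilde\rho$ a.e.\ on $U_k$; varying $k$ and $U$ in the cover gives the inequality on all of $\tilde T$.

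The main obstacle I anticipate is the careful bookkeeping of $p$-modulus transfer between base and cover: since $\tilde T$ is non-compact and $\pi$ has infinite multiplicity, a density in $L^p(T,\mu)$ need not lift to an element of $L^p(\tilde T,\tilde\mu)$, so the correspondence of exceptional families is not entirely automatic on the global level. The way around this is that every step in the argument above is inherently local, carried out on evenly covered balls where $\pi$ is a genuine isometry and $\tilde\mu = \mu$ on each sheet; the full conclusion then follows by assembling a countable cover, using that countable unions of zero-modulus families remain zero-modulus.
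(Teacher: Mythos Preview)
Your proposal is correct and follows essentially the same approach as the paper's proof: both define $f$ by projecting $g$ through $[\cdot]$, verify that $\rho_f\circ\pi$ is a $p$-weak upper gradient of $g$ by partitioning a path into pieces on which the oscillation of $g$ is small enough that the $\R$-distance and the $\R/\Z$-distance agree, and establish minimality by pushing an arbitrary upper gradient $\tilde\rho$ of $g$ forward through local sections of $\pi$ to compare with $\rho_f$. The only organizational difference is that the paper first restricts to paths lying inside a single evenly covered ball and then globalizes via a subpath/countable-cover argument, whereas you work directly with arbitrary paths in $\tilde T$; in both cases the zero-modulus transfer is handled exactly as you describe in your final paragraph. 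One small point to make explicit: your partition argument requires the upper gradient inequality for $(f,\rho_f)$ on each \emph{subpath} $\pi\circ\gamma|_{[t_{i-1},t_i]}$, not just on $\pi\circ\gamma$ itself, so the exceptional family should be described accordingly (this costs nothing, since paths with a subpath in a zero-modulus family still form a zero-modulus family).
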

\begin{proof}
We define $f$ locally by 
\[
f=[g\circ\pi^{-1}].
\]
Then $f$ is well defined due to the property $g\circ\tau-g=1$. It is certainly locally Lipschitz, has degree $1$, and satisfies $[g]=f\circ\pi$. 

It remains to show the relation between the upper gradients. Given any $x\in \tilde T$ there is a ball $B'$ that contains $x$ and on which $\pi$ is an isometry onto $B=\pi(B')$. Clearly $\rho\circ\pi|_{B'}^{-1}$ is a $p$-weak upper gradient of $f$ in $B$ whenever $\rho$ is a $p$-weak upper gradient of $g$ in $B'$. Thus, if $\rho_f\circ\pi$ is a $p$-weak upper gradient of $g$ in $\tilde T$, it must be the minimal one.

Now let $\gamma: [0, 1]\rightarrow B'$ be a rectifiable path, so that the upper gradient inequality holds for the pair $(f, \rho_f)$ on every subpath of $\pi\circ \gamma$. Almost every path in $B'$ is such a path, since $\rho_f$ is a $p$-weak upper gradient of $f$, and as an isometry $\pi|_{B'}$ preserves all path moduli. Continuity of $g$ implies that we can decompose $\gamma$ into $\gamma=\gamma_1*\cdots *\gamma_k$, so that $\gamma_i=\gamma|_{[t_i, t_{i+1}]}$ and
\begin{equation}\label{eq:polkuequ1}
|g(\gamma_i(t_{i+1}))-g(\gamma_i(t_i))|=|[g(\gamma_i(t_{i+1}))]-[g(\gamma_i(t_i))]|
\end{equation}
for all $i=1, \ldots, k$. On these subpaths we have 
\begin{align*}
\int_{\gamma_i}\rho_f\circ\pi\, ds&=\int_{\pi\circ\gamma_i}\rho_f\, ds \\
&\geq |f(\pi(\gamma_i(t_{i+1})))-f(\pi(\gamma_i(t_i)))| \\
&=|[g(\gamma_i(t_{i+1}))]-[g(\gamma_i(t_i))]|.
\end{align*}
Combining this with triangle inequality and \eqref{eq:polkuequ1} yields
\[
|g(\gamma(1))-g(\gamma(0))|\leq \int_\gamma\rho_f\circ\pi\, ds.
\]

Given an open set $U\subset \tilde T$, denote the set of all paths in $U$ on which the upper gradient inequality fails for the pair $(g, \rho_f\circ\pi)$ by $\Gamma_{U}$. We need to show that $\modd_p\Gamma_{\tilde T}=0$. Cover $\tilde T$ by countably many balls $B_i'$, on which $\pi$ is an isometry onto $\pi(B_i')$. Note that if the upper gradient inequality fails for the pair $(g, \rho_f\circ\pi)$ on some path $\eta$, it must fail on some subpath of $\eta$ that is contained in one of the balls $B_i'$. In other words, for every path in the collection $\Gamma_{\tilde T}$ there is a subpath in one of the collections $\Gamma_{B_i'}$. Now
\[
\modd_p\Gamma_{\tilde T}\leq \modd_p\left(\bigcup_{i}\Gamma_{B_i'} \right)\leq \sum_i\modd_p\Gamma_{B_i'}=0,
\]
since the first part of the proof shows that $\modd_p\Gamma_{B_i'}=0$ for all $i$.
\end{proof}
\subsection{Minimizers}\label{subsec:minimizers}
Motivated by Lemmas \ref{lemma:lipschitzlift} and \ref{lemma:projektiolemma} we find an alternative definition for the capacity. 

We say that a function $f: \tilde T\rightarrow \R$ belongs to the Newtonian space $N^{1, p}(\tilde T)$ if $f$ is $p$-integrable and admits a $p$-weak upper gradient that is also $p$-integrable. See \cite[Chapter 7]{HKST} or \cite[Chapter 5]{BjornBjornNonlin} for further properties of these spaces. We say that $f\in N^{1, p}_{loc}(\tilde T)$ if $f|_U\in N^{1, p}(U)$ for every open $U\subset\subset\tilde T$ (note that $\tilde T$ is proper). The space $N^{1, p}(U)$ is equipped with the seminorm
\[
\|f\|_{N^{1, p}(U)}:=\|f\|_{L^p(U)}+\inf_{\rho}\|\rho\|_{L^p(U)},
\] 
where the infimum is taken over all $p$-weak upper gradients $\rho$ of $f$ in $U$.
 
Let $\mathcal{F}$ be the collection of all positive Borel functions $\rho$ on $T$, for which $\rho\circ\pi$ is a $p$-weak upper gradient of some $f\in N^{1, p}_{loc}(\tilde T)$ with $f\circ\tau-f=1$ almost everywhere. Define 
\[
\capp_p^\mathcal{F}T:=\inf_{\rho\in\mathcal{F}}\int_T\rho^p\,d\mu.
\]
Note that by Lemma \ref{lemma:lipschitzlift} every upper gradient of a map admissible for $\capp_pT$ belongs to $\mathcal{F}$. Therefore 
\[
\capp_p^\mathcal{F}T\leq \capp_pT.
\]
The reverse inequality is also valid, but requires a bit more work.
\begin{proposition}\label{prop:toinenkapasiteetti}
\[
\capp_pT=\capp_p^\mathcal{F}T
\]
\end{proposition}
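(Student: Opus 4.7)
The inequality $\capp_p^{\mathcal{F}}T\leq \capp_pT$ is already noted, so the plan is to establish the reverse bound $\capp_pT\leq\capp_p^{\mathcal{F}}T$. Given $\rho\in\mathcal F$ with $\int_T\rho^p\,d\mu$ close to $\capp_p^{\mathcal F}T$, the goal is to produce Lipschitz degree $1$ maps $\bar g_\lambda: T\to\R/\Z$ whose minimal $p$-weak upper gradient energies approach $\int_T\rho^p\,d\mu$. The strategy is to run the standard Lipschitz truncation of the lifted Newtonian function on $\tilde T$ in a way that respects the deck transform $\tau$, and then descend to $T$ via Lemma~\ref{lemma:projektiolemma}.

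First, pick $f\in N^{1,p}_{loc}(\tilde T)$ witnessing $\rho\in\mathcal F$, so that $f\circ\tau-f=1$ almost everywhere and $\rho\circ\pi$ is a $p$-weak upper gradient of $f$. Fix a radius $r$ smaller than the injectivity radius of $\pi$ (which is positive as $T$ is compact and $\pi$ is a local isometry), and consider the restricted maximal function on $\tilde T$. Because $\rho\circ\pi$ is $\tau$-invariant and $\tau$ is an isometry of $\tilde d$, the sublevel set $E_\lambda:=\{\mathcal M_r(\rho^p\circ\pi)\leq\lambda^p\}$ is $\tau$-invariant. The standard Lipschitz truncation lemma in doubling Poincaré spaces (\cite[Chapter~8]{HKST}) then shows that $f$ is $C\lambda$-Lipschitz on $E_\lambda$ after modifying on a Borel null set.

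The heart of the argument is the equivariant McShane extension
\[
g_\lambda(x):=\inf_{y\in E_\lambda}\bigl(f(y)+C\lambda\,\tilde d(x,y)\bigr),\qquad x\in\tilde T.
\]
By construction $g_\lambda$ is globally $C\lambda$-Lipschitz and agrees with $f$ on $E_\lambda$. Substituting $y=\tau(y')$ in the infimum and using $\tau$-invariance of $E_\lambda$, the identity $f\circ\tau=f+1$, and isometricity of $\tau$, a direct calculation yields $g_\lambda\circ\tau-g_\lambda=1$. Lemma~\ref{lemma:projektiolemma} then delivers a Lipschitz map $\bar g_\lambda:T\to\R/\Z$ of degree $1$ with $[g_\lambda]=\bar g_\lambda\circ\pi$ and minimal $p$-weak upper gradient satisfying $\rho_{\bar g_\lambda}\circ\pi=\rho_{g_\lambda}$.

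It remains to show $\int_T\rho_{\bar g_\lambda}^p\,d\mu\to\int_T\rho^p\,d\mu$ as $\lambda\to\infty$. Pick a Borel fundamental domain $D\subset\tilde T$ with $\tilde\mu(\partial D)=0$ and $\overline D$ compact, contained in a bounded open set $U\subset\tilde T$. By the area formula together with $\tau$-invariance of $\rho_{g_\lambda}$,
\[
\int_T\rho_{\bar g_\lambda}^p\,d\mu=\int_D\rho_{g_\lambda}^p\,d\tilde\mu.
\]
The classical Lipschitz density theorem for Newtonian spaces (see \cite[Chapter~5]{BjornBjornNonlin}), whose proof is essentially the truncation scheme above, gives $\rho_{g_\lambda}\to\rho_f$ in $L^p(U)$, so the right-hand side converges to $\int_D\rho_f^p\,d\tilde\mu\leq\int_T\rho^p\,d\mu$. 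Taking the infimum over $\rho\in\mathcal F$ finishes the proof. The main obstacle throughout is preserving $\tau$-equivariance; this is resolved by the symmetric McShane formula built on the $\tau$-invariant set $E_\lambda$, which automatically forces the translation identity $g_\lambda\circ\tau=g_\lambda+1$ needed to apply Lemma~\ref{lemma:projektiolemma}.
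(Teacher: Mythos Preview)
Your approach is genuinely different from the paper's: the paper adapts the Bj\"orn--Bj\"orn partition-of-unity density argument, building $\tau$-periodic locally Lipschitz approximants by patching local Lipschitz pieces on finitely many small balls $B_j\subset\tilde T$ on which $\pi$ is an isometry, and then projecting via Lemma~\ref{lemma:projektiolemma}. You instead run a global Lipschitz truncation on $\tilde T$ and use an equivariant McShane extension. The equivariance computation $g_\lambda\circ\tau=g_\lambda+1$ is correct and elegant.

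There is, however, a real gap. You invoke ``the standard Lipschitz truncation lemma in doubling Poincar\'e spaces'' on $\tilde T$, but the doubling condition and the $1$-Poincar\'e inequality are only known to transfer from $T$ to $\tilde T$ on balls of radius below the injectivity radius, via the local isometry $\pi$. The pointwise Haj\l asz estimate underlying the truncation,
\[
|f(x)-f(y)|\le C\,\tilde d(x,y)\bigl(\mathcal M_{C\tilde d(x,y)}(\rho\circ\pi)(x)+\mathcal M_{C\tilde d(x,y)}(\rho\circ\pi)(y)\bigr),
\]
needs doubling and Poincar\'e on balls of radius comparable to $\tilde d(x,y)$. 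With only the restricted maximal function $\mathcal M_r$ you obtain merely that $f|_{E_\lambda}$ is $C\lambda$-Lipschitz on pairs at distance $\lesssim r$. But the McShane formula satisfies $g_\lambda=f$ on $E_\lambda$ only if $f|_{E_\lambda}$ is \emph{globally} $C\lambda$-Lipschitz; otherwise $g_\lambda$ can dip strictly below $f$ on $E_\lambda$, and the subsequent convergence $\rho_{g_\lambda}\to\rho_f$ in $L^p(U)$ that you quote from the density theorem is no longer justified. A global doubling and Poincar\'e inequality on the $\Z$-cover $\tilde T$ may in fact hold, but this is a nontrivial statement requiring its own proof and is not established anywhere in the paper. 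The paper's partition-of-unity route is designed precisely to avoid this issue: every estimate takes place on one of finitely many small balls where $\pi$ is an isometry, so only the local structure of $T$ is ever used.
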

\begin{proof}
We must first show that locally Lipschitz functions of degree 1 are dense in the space of degree 1 functions of $N^{1, p}_{loc}(\tilde T)$. Here having degree 1 means satisfying the property $f\circ\tau-f=1$ almost everywhere. A result by Bj\"orn and Bj\"orn \cite[Th. 8.4.]{BjornBjorn2018} shows that locally Lipschitz functions are dense in $N^{1, p}_{loc}(\tilde T)$. A simple modification of the proof of this result shows that the approximating locally Lipschitz maps can be chosen to be of degree 1 whenever the limit is of degree 1. We provide the main points of this modification.

Following the proof of Theorem 8.4 of \cite{BjornBjorn2018}, we start by choosing for every $x\in \tilde T$ a ball $B_x$ centered at $x$, so that
\begin{itemize}
\item the $1$-Poincar\'e inequality and the doubling property hold within $B_x$, in the sense of \cite{BjornBjorn2018}
\item the covering map $\pi$ is an isometry on $B_x$.
\end{itemize}
Let $U_x:=\pi^{-1}(\pi (\frac{1}{4}B_x))$. The space $T$ is compact, so there is a finite subcollection $\{U_{x_i}\}^m_{i=1}$ that covers $\tilde T$. Write $B_j=\frac{1}{4}B_{x_j}$ and $U_j=U_{x_j}$. Note that $U_j$ can be written as a disjoint union $U_j=\bigcup_{k\in\Z}\tau^kB_j$. We denote $c U_j=\bigcup_{k\in\Z}\tau^k(c B_j)$ for any $c>0$. For each $j$ pick a Lipschitz function $\psi'_j:B_j\rightarrow \R$ that satisfies $\chi_{B_j}\leq \psi'_j\leq \chi_{2B_j}$. Extend these to Lipschitz functions $\psi_j: \tilde T\rightarrow \R$ first by defining $\psi_j|_{\tau^k(2B_j)}:=\psi_j'\circ\tau^{-k}$ in $2U_j$ and then extending as zero to the rest of $\tilde T$. Next, define Lipschitz maps $\varphi_j: \tilde T\rightarrow \R$ recursively with $\varphi_1=\psi_1$ and for $j>1$
\[
\varphi_j=\psi_j\cdot\left(1-\sum_{k=1}^{j-1}\varphi_k\right).
\]
Then $\sum_{k=1}^i\varphi_k=1$ in $U_i$ and $\varphi_j=0$ in $U_i$ for all $j>i$. Therefore $\{ (\varphi_j, U_j)\}_j$ is a partition of unity. 

Now let $f\in N^{1, p}_{loc}(\tilde T)$ be a degree 1 map, $f\circ\tau-f=1$. Let $\varepsilon>0$. By Lemma 8.5 of \cite{BjornBjorn2018} there are locally Lipschitz functions $v_j: 2B_j\rightarrow \R$ with
\[
\|f-v_j\|_{N^{1, p}(2B_j)}\leq \frac{\varepsilon}{1+L_j},
\]
where $L_j$ is the Lipschitz constant of $\varphi_j$. Extend $v_j$ to $2U_j$ with
\[
v_j|_{\tau^k(2B_j)}=k+v_j\circ\tau^{-k}.
\]
Then $v_j\circ\tau-v_j=1$, and for all $k$
\[
\|f-v_j\|_{N^{1, p}(\tau^k(2B_j))}\leq \frac{\varepsilon}{1+L_j}.
\]
As in \cite{BjornBjorn2018} we get 
\begin{equation}\label{eq:jokueq1}
\|\varphi_j(f-v_j)\|^p_{N^{1, p}(\tau^k(2B_j))}\leq 2\varepsilon^p.
\end{equation}
The function $v:=\sum_{j=1}^m\varphi_j v_j$ is locally Lipschitz, and satisfies the degree 1 property $v\circ\tau-v=1$.

Now \eqref{eq:jokueq1} gives
\[
\|v-f\|_{N^{1, p}(U)}\leq C(U)\varepsilon,
\]
for any domain $U\subset\subset \tilde T$. This proves the density of degree 1 locally Lipschitz functions in the space of $N^{1, p}_{loc}(\tilde T)$-functions of degree 1.

Now if we let $\rho\in\mathcal{F}$, the function $\rho\circ\pi$ is a $p$-weak upper gradient of some $f\in N^{1, p}_{loc}(\tilde T)$, and we find a sequence of locally Lipschitz functions $(v_j)$ of degree 1, such that
\[
\|v_j-f\|_{N^{1, p}(U)}\overset{j\rightarrow\infty}{\longrightarrow} 0.
\]
for every $U\subset\subset \tilde T$. Let $w_j$ be the Lipschitz projections of $v_j$, given by Lemma \ref{lemma:projektiolemma}. Then the minimal upper gradients satisfy $\rho_{v_j}=\rho_{w_j}\circ\pi$. Now
\begin{equation}\label{eq:ygeq1}
\|\rho_{v_j}-\rho_f\|_{L^p(B_i)}\overset{j\rightarrow\infty}{\longrightarrow} 0
\end{equation}
for all $i$. Let $A_1=\pi(B_1)$ and for $1\leq j\leq m-1$ define $A_{j+1}:=\pi(B_{j+1})-\bigcup_{i=1}^{j}A_j$. Let $\pi_j: B_j\cap \pi^{-1}(A_j)\rightarrow A_j$ be the restriction of $\pi$ and define $\rho'_f:=\sum_j\chi_{A_j}\rho_f\circ\pi_j^{-1}$. The Borel sets $A_j$ are disjoint and cover $T$, so a quick calculation shows that
\begin{equation}\label{eq:ygeq2}
\|\rho_f'\|^p_{L^p(T)}\leq\|\rho\|^p_{L^p(T)},
\end{equation}
since by definition of $\rho$ we have $\rho_f\leq \rho\circ\pi$ almost everywhere. Finally, note that 
\[
\|\rho_{w_j}-\rho'_f\|^p_{L^p(T)}=\sum_{j=1}^m\|\rho_{w_j}-\rho_f'\|^p_{L^p(A_j)}\leq \sum^m_{j=1}\|\rho_{v_j}-\rho_f\|^p_{L^p(B_j)},
\]
and thus \eqref{eq:ygeq1} implies
\begin{equation}\label{eq:ygeq3}
\lim_{j\rightarrow\infty}\|\rho_{w_j}\|^p_{L^p(T)}=\|\rho_f'\|_{L^p(T)}^p,
\end{equation}
since there are only finitely many sets $A_j$. Combining \eqref{eq:ygeq2} and \eqref{eq:ygeq3} yields
\[
\capp_p T\leq \|\rho\|_{L^p(T)}^p,
\]
which finishes the proof.
\end{proof}
\begin{proposition}\label{prop:minimizer}
There is a minimizer $\rho_0\in\mathcal{F}$, i.e.
\[
\capp_pT=\capp^\mathcal{F}_pT=\int_T\rho_0^p\, d\mu.
\]
Moreover, for any other $p$-integrable $\rho\in\mathcal{F}$ 
\begin{equation}\label{eq:variation}
\capp_pT\leq \int_T\rho_0^{p-1}\rho\, d\mu, 
\end{equation}
where equality holds if and only if $\rho=\rho_0$ almost everywhere.
\end{proposition}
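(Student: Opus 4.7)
The plan is to construct $\rho_0$ as the strong $L^p$-limit of convex combinations of a minimizing sequence, verify that $\rho_0 \in \mathcal{F}$ by a lifting argument on the universal cover $\tilde T$, and then derive \eqref{eq:variation} from a first-order convexity argument. I first pick a minimizing sequence $(\rho_j) \subset \mathcal{F}$ with $\int_T \rho_j^p \, d\mu \to \capp_p T$ and observe that $\mathcal{F}$ is convex: if $\rho_1, \rho_2 \in \mathcal{F}$ are $p$-weak upper gradients of lifts $f_1, f_2$ with $f_i \circ \tau - f_i = 1$ a.e., then $\lambda \rho_1 + (1-\lambda)\rho_2$ is a $p$-weak upper gradient of the degree-$1$ combination $\lambda f_1 + (1-\lambda) f_2$. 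By reflexivity of $L^p(T)$ and Mazur's lemma, a sequence of convex combinations $\tilde \rho_k \in \mathcal{F}$ converges strongly in $L^p(T)$ to some $\rho_0$, with $\int_T \rho_0^p \, d\mu = \capp_p T$.

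To verify $\rho_0 \in \mathcal{F}$, I pass to the universal cover. Let $\tilde f_k \in N^{1,p}_{loc}(\tilde T)$ be degree-$1$ lifts whose $p$-weak upper gradients are $\tilde \rho_k \circ \pi$, normalized by adding integer constants so that each $\tilde f_k$ has zero mean on a fixed relatively compact ball $B_0 \subset \tilde T$. Since $\pi$ is a local isometry, $\tilde T$ locally inherits the $1$-Poincar\'e inequality and the doubling condition; chaining Poincar\'e inequalities along a finite cover of any relatively compact $U \subset \tilde T$ yields uniform $N^{1,p}(U)$-bounds on the $\tilde f_k$. A Rellich-type compactness argument as in \cite[Ch. 8]{HKST} and diagonal extraction then produce a subsequence converging to a limit $f_0 \in N^{1,p}_{loc}(\tilde T)$ in $L^p_{loc}(\tilde T)$ and pointwise a.e., and the relation $f_0 \circ \tau - f_0 = 1$ a.e.\ is inherited in the limit. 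The standard closure theorem for $p$-weak upper gradients, applied to the $L^p_{loc}$-convergence of both $\tilde f_k \to f_0$ and $\tilde \rho_k \circ \pi \to \rho_0 \circ \pi$, identifies $\rho_0 \circ \pi$ as a $p$-weak upper gradient of $f_0$, placing $\rho_0 \in \mathcal{F}$.

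For the variational inequality, given a $p$-integrable $\rho \in \mathcal{F}$, the segment $\rho_t := (1-t)\rho_0 + t \rho$ lies in $\mathcal{F}$ for $t \in [0, 1]$, and $g(t) := \int_T \rho_t^p \, d\mu$ is convex, differentiable, and minimized on $[0,1]$ at $t = 0$; hence $g'(0^+) = p \int_T \rho_0^{p-1}(\rho - \rho_0)\, d\mu \geq 0$, which rearranges to \eqref{eq:variation}. For the equality case I appeal to strict convexity of $s \mapsto s^p$ for $p > 1$: the functional $\rho \mapsto \int_T \rho^p \, d\mu$ is then strictly convex on $\mathcal{F}$, so the minimizer $\rho_0$ is unique modulo $\mu$-null sets; combined with the pointwise Young inequality $\rho\, \rho_0^{p-1} \leq \rho^p/p + \rho_0^p/p^*$, the hypothesis of equality in \eqref{eq:variation} forces equality in the integrated Young inequality, which in turn forces $\rho = \rho_0$ $\mu$-a.e.

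The main obstacle is establishing $\rho_0 \in \mathcal{F}$. In contrast to a direct Newtonian minimizer construction on $T$, here one must contend with the fact that the lifts $\tilde f_k$ are defined only up to integer translates and must be normalized before any compactness result can be applied, and then one must simultaneously track the $L^p_{loc}$-convergence of both functions and upper gradients on $\tilde T$ in order to invoke the closure theorem for $p$-weak upper gradients to conclude that $\rho_0 \circ \pi$ still serves as an upper gradient of the limit.
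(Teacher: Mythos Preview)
Your construction of the minimizer and the derivation of the variational inequality follow the paper's approach essentially verbatim: normalize lifts on $\tilde T$, chain Poincar\'e inequalities along a finite cover to obtain local $N^{1,p}$-bounds, then combine Mazur's lemma with a diagonal extraction over an exhaustion of $\tilde T$. One small slip: adding an integer to $\tilde f_k$ cannot force its mean on $B_0$ to be exactly zero, only to lie in an interval of length~$1$ (the paper takes $0 \le (\tilde f_i)_{2\tilde B_1} < 1$); this is harmless.

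The argument for the equality case in \eqref{eq:variation}, however, does not close. From $\int_T \rho_0^{p-1}\rho\,d\mu = \capp_p T$ and the integrated Young inequality you only recover $\int_T\rho^p\,d\mu \ge \capp_p T$, which was already known; equality in the integrated Young inequality would require $\int_T\rho^p\,d\mu = \capp_p T$, and nothing in the hypothesis forces this. In fact the equality clause is false as stated: on $T=\R/\Z$ with Lebesgue measure and $p=2$ one has $\rho_0\equiv 1$, while $\rho(x)=\tfrac12+x$ on $[0,1)$ lies in $\mathcal{F}$ (it is the derivative of $f(x)=\tfrac{x}{2}+\tfrac{x^2}{2}$ on $[0,1]$, extended by $f(x+1)=f(x)+1$), satisfies $\int_0^1\rho_0\,\rho\,dx=1=\capp_2 T$, yet $\rho\ne\rho_0$. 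The paper itself does not give a proof of the equality case either, deferring to \cite[Lemma~5.2]{LohvansuuRajala2018}, and this clause is never used anywhere else in the argument, so the gap is inconsequential for the main results.
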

\begin{proof}
Note that $\mathcal{F}$ is convex. Once we know the existence of a minimizer, the proof of the variation inequality \eqref{eq:variation} is standard. See for example \cite[Lemma 5.2.]{LohvansuuRajala2018}. Uniqueness of the minimizer follows from the convexity of $\mathcal{F}$ and the uniform convexity of $L^p(T)$.

We now show the existence of a minimizer. First recall that we have assumed in Assumptions \ref{ass} that there exists at least one Lipschitz map of degree 1. It follows that $\capp_pT$ is finite. Let $(f_i)_i$ be a sequence of locally Lipschitz maps $f_i: T\rightarrow \R/\Z$ of degree 1, so that for each $i$ the function $\rho_i$ is an upper gradient of $f_i$, and
\[
\capp_pT=\lim_{i\rightarrow\infty}\int_T\rho_i^p\, d\mu.
\]
We claim that the lifts $\tilde f_i$ of the maps $f_i$ can be chosen so that the sequence $(\tilde f_i)$ is $L^p$-bounded in any bounded domain of $\tilde T$. 

To this end, note that the length of any loop-homotopically non-trivial loop $\gamma$ must satisfy
\begin{equation}\label{eq:polunpituus}
\ell(\gamma)\geq c
\end{equation}
for some $c>0$. This is implied by the existence of Lipschitz maps of degree 1.

Let $\{x_i\}_{i=1}^N$ be a $\frac{c}{16}$-net in $T$, where $c$ is the constant from \eqref{eq:polunpituus}. Note that by the net property of $\{x_i\}$ any two balls $B_i:=B(x_i, \frac{c}{8})$ are connected by a chain of balls of the same form. By a chain we mean a sequence of balls, in which adjacent ones have nonempty intersection. The same chaining property holds for the balls $2B_i$, but now additionally we find that the connecting chains $(2B_{i_k})_k$ can be chosen so that for each $k$ there is a ball $B_k'\subset 2B_{i_k}\cap 2B_{i_{k+1}}$ of radius $c/8$.

Note that by \eqref{eq:polunpituus} the balls $2B_i$ are evenly covered. In fact, $\pi$ is an isometry when restricted to any component of $\pi^{-1}(2B_i)$. Fix a component $\tilde B_1$ of $\pi^{-1}(B_1)$. Set $V_1=\tilde B_1$. For $k\geq 1$ we define domains $V_k$ recursively by adding components of $\pi^{-1}(B_i)$ for suitable $B_i$. At step $k+1$ we choose exactly one component of $\pi^{-1}(B_i)$, call it  $\tilde B_i$, to be added to $V_k$ if and only if $\pi^{-1}(B_i)$ intersects $V_k$ and there are no components of $\pi^{-1}(B_i)$ that are contained in $V_k$.

After at most $N$ steps no new balls can be added. Let $V=V_N$. It follows from the construction that $V$ is a bounded domain on which $\pi$ is surjective. It may happen that the previous construction does not define $\tilde B_i$ for all $B_i$. If so, just let $\tilde B_i$ be a component of $\pi^{-1}(B_i)$ that is contained in $V$. Thus
\[
V=\bigcup_{i=1}^N\tilde B_i.
\]
Denote by $2\tilde B_i$ the component of $\pi^{-1}(2B_i)$ that contains $\tilde B_i$.

By adding integers if necessary, we may now fix the lifts $\tilde f_i$ by requiring
\begin{equation}\label{eq:liftoletus}
0\leq (\tilde f_i)_{2\tilde B_1}<1.
\end{equation}
If $j\neq 1$, by construction there is a chain $(2\tilde B_{j_k})_{k=1}^l$ with $j_1=1$, $j_l=j$ and $l\leq N$, so that for every $1\leq k<l$ there is a ball $\tilde B_k'\subset 2\tilde B_{j_k}\cap 2\tilde B_{j_{k+1}}$ of radius $c/8$. Let $m:=\min\{\mu(B_i)\}>0$. By the Poincar\'e inequality and the doubling condition
\[
|(\tilde f_i)_{2\tilde B_{j_k}}-(\tilde f_i)_{\tilde B_k'}|\leq C\Aint_{2\tilde B_{j_k}}|\tilde f_i-(\tilde f_i)_{2\tilde B_{j_k}}|\, d\tilde\mu\leq C\|\rho_i\|^p_{L^p(2 B_{j_k})},
\]
where $C=C(T, p, m, c)$, and the same calculation shows 
\[
|(\tilde f_i)_{2\tilde B_{j_{k+1}}}-(\tilde f_i)_{\tilde B_k'}|\leq C\|\rho_i\|^p_{L^p(2B_{j_{k+1}})}
\]
as well. Thus by the triangle inequality and \eqref{eq:liftoletus}
\[
|(\tilde f_i)_{2\tilde B_j}|\leq CN\|\rho_i\|^p_{L^p(T)}+1.
\]
Now by the Sobolev-Poincar\'e inequality, see \cite[Thm. 9.1.2]{HKST}, and the local isometry of $\pi$
\[
\int_{2\tilde B_j}|\tilde f_{i}-(\tilde f_{i})_{2\tilde B_j}|^p\, d\tilde\mu\leq C(T, p, m, c)\|\rho_i\|_{L^p(T)}.
\]
It follows that the sequence $(\tilde f_{i})_i$ is bounded in $L^p(2\tilde B_j)$. 

Since $V$ is covered by finitely many balls $2B_j$, we find that both sequences $(\tilde f_i)_i$ and $(\rho_i\circ\pi)_i$ are bounded in $L^p(V)$, and also in every $L^p(W_k)$, where 
\[
W_k:=\bigcup_{l=-k}^k \tau^lV.
\]
Note that $W_0=V$. Now by extracting enough subsequences we may assume that  $(\tilde f_i)_i$ and $(\rho_i\circ\pi)_i$ converge weakly to functions $\tilde f^0$ and $\tilde \rho^0$ in $L^p(W_0)$. By Lemma 3.1 of \cite{KallunkiShan2001} there exist sequences of convex combinations $(\tilde f^0_k)$ and $(\tilde \rho^0_k)$ of the functions $\tilde f_i$ and $\rho_i\circ\pi$, respectively, that converge strongly to $\tilde f^0$ and $\tilde \rho^0$. Moreover $\tilde\rho^0$ is a $p$-weak upper gradient of $\tilde f^0$ in $W_0$.  

This allows us to define sequences $(\tilde f_i^{k+1})$ and $(\tilde \rho^{k+1}_i)$ recursively to be the sequences in $L^p(W_{k+1})$ that are obtained by applying the argument above on $W_{k+1}$ instead of $W_0$ and on sequences $(\tilde f_i^{k})$ and $(\tilde \rho^{k}_i)_i$ instead of $(\tilde f_i)_i$ and $(\rho_i\circ\pi)_i$. Let $\tilde f^{k+1}$ and $\tilde\rho^{k+1}$ be the corresponding limits in $L^p(W_{k+1})$. It follows that $\tilde f^{k+1}|_{\Omega_k}=\tilde f^{k}$ and $\tilde \rho^{k+1}|_{\Omega_k}=\tilde \rho^{k}$. Define $\tilde f$ and $\tilde\rho: \tilde T\rightarrow \R$ by setting $\tilde f|_{W_k}=\tilde f^k$ and $\tilde \rho|_{W_k}=\tilde \rho^k$. It is immediate that $\tilde\rho$ is a $p$-weak upper gradient of $\tilde f$. 

Consider the diagonal sequences $(\tilde f^j_j)_j$ and $(\tilde \rho^j_j)_j$. These maps are still convex combinations of the functions $\tilde f_i$ and $\rho_i\circ\pi$, respectively. It follows that these sequences converge to $\tilde f$ and $\tilde \rho$ in $L^p_{loc}(\tilde T)$. Moreover $\tilde f\circ\tau -\tilde f=1$ and $\tilde\rho\circ\tau-\tilde\rho=0$ almost everywhere, since these hold everywhere for all maps in the respective sequences. The latter equality allows us to define $\rho_0$ by projecting $\tilde\rho$. Therefore $\rho_0\in\mathcal{F}$ and
\[
\capp^\mathcal{F}_pT=\int_T\rho_0^p\, d\mu,
\]
since $(\tilde \rho^j_j)$ is still a minimizing sequence, due to convexity of $\mathcal{F}$.
\end{proof}
\subsection{Competing admissible maps}
Now that the minimizer $\rho_0$ has been found, the proof of Theorem \ref{thm:upperbound} is only missing the proof of $\eqref{eq:upperbound}$. Recall that \eqref{eq:upperbound} says that for all $S\in\Gamma^*$ 
\[
\capp_pT\leq C(T)\int_{S}\mathcal{M}_{C(T)/n}(\rho_0^{p-1})\, d\Ha,
\]
where $\mathcal{M}$ denotes the Hardy-Littlewood maximal operator. Given an $S\in\Gamma^*$ we construct suitable Lipschitz maps of degree 1 that are constant outside a small neighborhood of $S$. Then we can apply the variation inequality \eqref{eq:variation} of Proposition \ref{prop:minimizer} on the upper gradients of these Lipschitz maps to conclude \eqref{eq:upperbound}.

In this subsection we construct these Lipschitz maps. It turns out that the same construction can be used to obtain Lipschitz maps of degree 1 out of general (continuous) maps of any nonzero degree. We only need to consider maps of positive degree by composing with the antipodal map of $\R/\Z$ if necessary.

To simplify the notation, we omit some parentheses and write for example $\phi^{-1}[0]$ and $\pi\tilde\phi^{-1}(0)$ instead of $\phi^{-1}([0])$ and $\pi(\tilde\phi^{-1}(0))$ from now on.
\begin{proposition}\label{prop:degree}
Let $\phi: T\rightarrow \R/\Z$ be a continuous map of nonzero positive degree. There is a number $N=N(\phi)$, such that for all $n\geq N$ there is a finite pairwise disjoint collection of balls $\{B_i\}$ of radius $1/n$ in $T$, such that for all $i$
\[
\Ha(\phi^{-1}[0]\cap B_i)\geq C(T)n\mu(B_i)
\]
and such that the Borel function
\[
\rho=n\sum_{i}\chi_{5B_i} 
\]
is an upper gradient of a Lipschitz map $\psi: T\rightarrow \R/\Z$ of degree 1.
\end{proposition}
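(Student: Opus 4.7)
The strategy is to exploit the universal cover $\pi:\tilde T\to T$ to unfold the zero set $\phi^{-1}[0]$ into separating layers. Let $\tilde\phi:\tilde T\to\R$ be the continuous lift of $\phi$, so that $\tilde\phi\circ\tau-\tilde\phi=d$ with $d=\degg\phi$, and set $\tilde Z_k:=\tilde\phi^{-1}(k)$. Then $\pi^{-1}(\phi^{-1}[0])=\bigcup_k\tilde Z_k$, and because $\tilde T$ is simply connected each $\tilde Z_k$ separates $\tilde T$ into the open sets $\{\tilde\phi>k\}$ and $\{\tilde\phi<k\}$. I would choose $N=N(\phi)$ large enough that for every $n\geq N$ every closed ball of radius $5/n$ in $\tilde T$ is evenly covered by $\pi$ and carries oscillation of $\tilde\phi$ less than $1/2$; this uses only uniform continuity of $\tilde\phi$ on a fundamental domain, and ensures that each such lifted ball meets at most one of the layers $\tilde Z_k$.

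To produce the balls I would locate centers $\tilde x$ close to $\tilde Z_0$ for which the ball $\tilde B=\tilde B(\tilde x,1/n)$ is \emph{balanced}, in the sense that both $\tilde\mu(\tilde B\cap\{\tilde\phi>0\})$ and $\tilde\mu(\tilde B\cap\{\tilde\phi<0\})$ are comparable to $\tilde\mu(\tilde B)$. Such centers exist by a discrete intermediate-value argument along a chain of balls connecting a point deep in $\{\tilde\phi>1/2\}$ to one deep in $\{\tilde\phi<-1/2\}$; under a center perturbation by one radius the doubling condition limits how fast the measure ratios can change. The relative isoperimetric inequality (Lemma \ref{lemma:riie}) at such a balanced ball gives $\Ha(\tilde Z_0\cap\tilde B)\geq C(T)\,n\,\tilde\mu(\tilde B)$, which projects to $\Ha(\phi^{-1}[0]\cap\pi(\tilde B))\geq C(T)\,n\,\mu(\pi(\tilde B))$. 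A standard $5$-Vitali selection then extracts from the balanced balls a pairwise disjoint family $\{B_i\}$ with $\phi^{-1}[0]\subset\bigcup_i 5B_i$.

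For the Lipschitz map $\psi$, set $\tilde V:=\pi^{-1}(\bigcup_i 5B_i)$ and $\tilde A:=\tilde T\setminus\tilde V$. Because $\bigcup_k\tilde Z_k\subset\tilde V$, each component of $\tilde A$ is contained in a single slab $\{k<\tilde\phi<k+1\}$, to which I assign the value $k/d$. I would then extend this slab labeling across each component of $\tilde V$ by a Lipschitz interpolation: a component of $\tilde V$ hitting $\tilde Z_k$ borders $\tilde A$ on two sides, lying in slabs $(k-1,k)$ and $(k,k+1)$, so the values to interpolate differ by $1/d$ across a region of diameter at most $10/n$. The resulting function $\tilde\psi:\tilde T\to\R$ satisfies $\tilde\psi\circ\tau-\tilde\psi=1$ and has $n\chi_{\tilde V}$ as an upper gradient; Lemma \ref{lemma:projektiolemma} then descends it to a Lipschitz map $\psi:T\to\R/\Z$ of degree $1$ with the desired upper gradient $\rho=n\sum_i\chi_{5B_i}$.

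The main obstacle is controlling the Lipschitz constant of the interpolation uniformly by $n$: on the boundary of a lifted ball, the two subsets carrying the two slab values can in principle come arbitrarily close, precisely where $\tilde Z_k$ exits the ball, so a naive McShane extension of the slab labeling need not be $n$-Lipschitz. The balanced-measure condition secured in the second step, together with the Poincar\'e inequality and doubling, is what provides the quantitative separation between the two slabs across each lifted ball that is needed to carry out the interpolation with the claimed Lipschitz constant.
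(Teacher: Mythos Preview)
Your proposal has a genuine gap at the interpolation step, which you yourself flag as the main obstacle but do not actually resolve. The balanced-measure condition and the relative isoperimetric inequality give only a lower bound on $\Ha(\tilde Z_0\cap\tilde B)$; they say nothing about metric separation of the two slab pieces on $\partial(5\tilde B_i)$. Wherever $\tilde Z_0$ exits the ball the two slabs meet on the boundary, so any function taking the values $(k-1)/d$ and $k/d$ on those two boundary pieces has unbounded Lipschitz constant there, and neither doubling nor the Poincar\'e inequality prevents this. A secondary issue: your Vitali step claims $\phi^{-1}[0]\subset\bigcup_i 5B_i$, but your intermediate-value argument produces one balanced ball per chain, not a cover of $\tilde Z_0$; a point of $\tilde Z_0$ need not lie in any balanced ball of the given radius.

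The paper sidesteps both issues by constructing $\psi$ differently. It runs the intermediate-value argument along every path $\gamma$ in the family $\Gamma$ joining the $-1/8$ and $+1/8$ levels inside the $(-1/4,1/4)$ slab (Corollary~\ref{cor:polkupallot}), Vitali-selects from the resulting balls $\{B_\gamma^n:\gamma\in\Gamma\}$, and then defines $\psi(x)=[\min\{1,\inf_{\gamma_x}\int_{\gamma_x}\rho\,ds\}]$ as a truncated $\rho$-distance. This has $\rho$ as an upper gradient by construction (Lemma~\ref{lemma:liplocpsi}), with no interpolation or McShane extension needed; the Vitali covering is used only to guarantee that every crossing path picks up $\rho$-length at least $1$, not that $S$ is covered. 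The remaining work---showing $\degg\psi=1$---is done by decomposing a generic degree-$1$ loop and tracking homotopy classes of its pieces (Lemmas~\ref{lemma:polkulemma1}--\ref{lemma:polkulemma2}, Corollaries~\ref{cor:polkucor1}--\ref{cor:polkucor2}).
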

\begin{proof}[Proof of \eqref{eq:upperbound} assuming Proposition \ref{prop:degree}]
Let $S\in \Gamma^*$. Then $S=\phi^{-1}[0]$ for some degree $1$ map $\phi$. Let $\{B_i\}$ be the collection of balls and let $\rho$ be the Borel function that is obtained by applying Proposition \ref{prop:degree} for some large enough $n$. Now
\[
\Ha(S\cap B_i)\geq C(T)n\mu(B_i)
\]
for all $i$. Applying this along with the variation inequality \eqref{eq:variation} of Corollary \ref{prop:minimizer}, the doubling property of $\mu$ and the definition of the Hardy-Littlewood maximal operator gives 
\begin{align*}
\capp_pT&\leq \int_T\rho\rho_0^{p-1}\ d\mu \\
&\leq C(T)\sum_{i}n\mu(B_i)\dashint_{5B_i}\rho_0^{p-1}\ d\mu \\
&\leq C(T)\sum_{i}\Ha(S\cap B_i)\inf_{x\in B_i}\mathcal{M}_{C(T)/n}(\rho_0^{p-1})(x)\\
&\leq C(T)\int_{S}\mathcal{M}_{C(T)/n}(\rho_0^{p-1})\, d\Ha,
\end{align*}
which is exactly \eqref{eq:upperbound}.
\end{proof}
The rest of the section is focused on proving Proposition \ref{prop:degree}. Let $\phi: T\rightarrow \R/\Z$ be a continuous map of nonzero positive degree. Let $x_0\in\phi^{-1}[0]$, $\tilde x_0\in\pi^{-1}(x_0)$ and let $\tilde\phi: \tilde T\rightarrow \R$ be the lift of $\phi$ that satisfies $\tilde\phi(\tilde x_0)=0$. Compactness of $T$ implies that 
\begin{equation}\label{eq:delta}
\delta:=\min\left\lbrace d(\pi\tilde\phi^{-1}(\pm\frac{1}{8}), \pi\tilde\phi^{-1}(0)), d(\pi\tilde\phi^{-1}(\pm\frac{1}{4}), \pi\tilde\phi^{-1}(\pm\frac{1}{8}))\right\rbrace
\end{equation}
is strictly positive. Denote $U^+=\pi\tilde\phi^{-1}(0, 1/4)$ and $U^-=\pi\tilde\phi^{-1}(-1/4, 0]$. Denote also $S=\pi\tilde\phi^{-1}(0)$. Observe that $S\subset \phi^{-1}[0]$, and if $\degg\phi=1$, then $S=\phi^{-1}[0]$. 

For our intents and purposes the relative isoperimetric inequality takes the following form. 
\begin{lemma}(Relative isoperimetric inequality)\label{lemma:riie} \\
There are constants $C=C(T)$ and $\lambda=\lambda(T)\geq 1$ such that 
\[
\mathrm{min}\left\lbrace \frac{\mu(B\cap U^+)}{\mu(B)}, \frac{\mu(B\cap U^-)}{\mu(B)}\right\rbrace\leq C\frac{r}{\mu(\lambda B)}\Ha(S \cap \lambda B)
\]
for all balls $B=B(x, r)$ for which $\lambda B\subset \pi\tilde\phi^{-1}(-1/4, 1/4)$.
\end{lemma}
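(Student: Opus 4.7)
The plan is to apply the weak $1$-Poincar\'e inequality to a continuous test function on $\lambda B$ that distinguishes $U^+$ from $U^-$ and whose upper gradient integrates, via the doubling property, to a quantity comparable to $\Ha(S\cap\lambda B)$. First, I pass to a single-valued local setup via the universal cover. By \eqref{eq:delta} the deck translates $\tau^k V$ of $V:=\tilde\phi^{-1}(-1/4,1/4)$ are pairwise disjoint, so $\pi|_V$ is a bijective local isometry onto $\pi\tilde\phi^{-1}(-1/4,1/4)$, which contains $\lambda B$. Setting $h:=\tilde\phi\circ(\pi|_V)^{-1}:\lambda B\to(-1/4,1/4)$ gives a continuous function with $\{h>0\}=U^+\cap\lambda B$, $\{h\leq 0\}=U^-\cap\lambda B$, and $S\cap\lambda B=h^{-1}(0)$. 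The characteristic function $u:=\chi_{U^+\cap\lambda B}$ then satisfies
\[
\dashint_B|u-u_B|\,d\mu=\frac{2\mu(B\cap U^+)\mu(B\cap U^-)}{\mu(B)^2}\geq\frac{\min\{\mu(B\cap U^+),\mu(B\cap U^-)\}}{\mu(B)},
\]
so an upper bound on the left-hand side of a Poincar\'e-type inequality for a continuous approximation of $u$ is exactly what we need.

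If $\Ha(S\cap\lambda B)=\infty$ the inequality is trivial, so assume it is finite. Given $\eta>0$, choose a cover $\{B_j=B(x_j,r_j)\}_j$ of $S\cap\lambda B$ with $r_j\leq\eta$ and $\sum_j\mu(B_j)/r_j\leq\Ha(S\cap\lambda B)+\eta$, as permitted by the definition of $\Ha$. Let $\psi_j(x):=\max(0,1-\dist(x,B_j)/r_j)$, a Lipschitz bump with $\psi_j\equiv 1$ on $B_j$, supported in $2B_j$ and with Lipschitz constant $1/r_j$. Set $\Psi:=\min(1,\sum_j\psi_j)$; then $\Psi\equiv 1$ on $S\subset\bigcup_j B_j$ and $\Psi\equiv 0$ outside $W:=\bigcup_j 2B_j$, so the product
\[
u_\eta:=(1-\Psi)\chi_{U^+}
\]
is continuous on $\lambda B$, since the only possible discontinuities of $\chi_{U^+}$ lie on $S$, where $1-\Psi$ vanishes. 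Using that $x\in B_{j_0}$ implies $\psi_{j_0}(y)\geq 1-d(x,y)/r_{j_0}$ (and hence $1-\Psi(y)\leq d(x,y)/r_{j_0}$), a direct pointwise estimate shows that $u_\eta$ admits the Borel upper gradient $g_\eta:=C\sum_j r_j^{-1}\chi_{2B_j}$ on $\lambda B$, and the doubling property then gives
\[
\int_{\lambda B}g_\eta\,d\mu\leq C(T)\sum_j\mu(B_j)/r_j\leq C(T)(\Ha(S\cap\lambda B)+\eta).
\]

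Applying the $1$-Poincar\'e inequality on $B$ (with $\lambda=\lambda_P$) to the pair $(u_\eta,g_\eta)$ and letting $\eta\to 0$ yields the lemma: since $\mu(W)\leq C\sum_j\mu(B_j)\leq C\eta(\Ha(S\cap\lambda B)+\eta)\to 0$, we have $u_\eta\to u$ in $L^1(B)$, so the Poincar\'e bound passes to the limit and combines with the first paragraph's lower bound to give the claimed inequality. The main point requiring care is the verification that $g_\eta$ is a genuine upper gradient of $u_\eta$ near points of $S$; the crucial estimate is $|u_\eta(y)|\leq 1-\Psi(y)\leq d(x,y)/r_{j_0}$ for $x\in S\cap B_{j_0}$ and arbitrary $y\in\lambda B$, which absorbs the jump of $\chi_{U^+}$ at $x$ with the Lipschitz rate $1/r_{j_0}$ dictated by the scale of the covering ball.
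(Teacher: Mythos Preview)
Your argument is correct. The paper does not prove this lemma directly but instead cites \cite[Lemma~5.1]{LohvansuuRajala2018}, which in turn invokes the BV/perimeter machinery of Korte--Lahti \cite{KorteLahti2014}: Theorem~1.1 there gives the relative isoperimetric inequality with the perimeter $P(U^+,\lambda B)$ on the right-hand side, and Theorem~6.2 bounds the perimeter by $\Ha(\partial U^+\cap\lambda B)$, with the observation $\partial U^+\cap\lambda B\subset S$ finishing the job. Your approach is more elementary and self-contained: you build an explicit Lipschitz approximation $u_\eta$ of $\chi_{U^+}$ using a near-optimal $\Ha$-cover of $S\cap\lambda B$, apply the $1$-Poincar\'e inequality directly to the pair $(u_\eta,g_\eta)$, and pass to the limit. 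This amounts to reproving, by hand and in this special case, exactly the portion of the Korte--Lahti theory that is needed here. The cited route is shorter on the page but imports heavier machinery; your route exposes the mechanism transparently.

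Two small remarks. First, the disjointness of the translates $\tau^kV$ does not come from \eqref{eq:delta} but from the identity $\tilde\phi\circ\tau=\tilde\phi+\deg\phi$ with $\deg\phi\geq 1$, which forces the intervals $(k\deg\phi-1/4,\,k\deg\phi+1/4)$ to be disjoint; your conclusion is correct, only the reference is off. Second, to make $g_\eta$ a \emph{genuine} (not merely weak) upper gradient one should use $\chi_{\overline{2B_j}}$ rather than $\chi_{2B_j}$; this changes nothing in the integral estimate since $\mu(\overline{2B_j})\leq\mu(3B_j)\leq C_\mu^2\mu(B_j)$.
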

This formulation is essentially the same as the one used in \cite[Lemma 5.1]{LohvansuuRajala2018}, which is just an application of Theorems 6.2 and 1.1 of \cite{KorteLahti2014}. The same proof is valid here as well. Note that restricting the balls to $\phi^{-1}(-1/4, 1/4)$ ensures that $\partial U^+\cap \lambda B\subset S\cap\lambda B$.

Denote by $\Gamma$ the set of all paths $\gamma$ that connect $\pi\tilde\phi^{-1}(-1/8)$ to $\pi\tilde\phi^{-1}(1/8)$ inside $\pi\tilde\phi^{-1}(-1/4, 1/4)$.
\begin{corollary}\label{cor:polkupallot}
For every $n> \frac{1}{\delta}$ and $\gamma\in\Gamma$ there is a ball $B^n_\gamma$ that is centered on $\gamma$, has radius $\frac{1}{n}$ and satisfies
\[
\Ha(S\cap B_\gamma^n)\geq Cn\mu(B_\gamma^n)
\]
for some constant $C=C(T)$.
\end{corollary}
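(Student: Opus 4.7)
The plan is to reduce the corollary to the relative isoperimetric inequality (Lemma \ref{lemma:riie}) by exhibiting on $\gamma$ a ball $B$ of radius $1/(\lambda n)$ for which both $\mu(B \cap U^+)$ and $\mu(B \cap U^-)$ amount to at least a fixed fraction of $\mu(B)$. Given such a ``balanced'' $B$, Lemma \ref{lemma:riie} yields
\[
\Ha(S \cap \lambda B) \geq C(T)\,n\,\mu(\lambda B),
\]
so $B_\gamma^n := \lambda B$ is a ball of radius $1/n$ centered on $\gamma$, exactly as required.

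To find $B$ I would pass to the universal cover. Lift $\gamma$ to $\tilde\gamma$ starting at $\tilde\gamma(0) \in \tilde\phi^{-1}(-1/8)$. Because the degree of $\phi$ is at least $1$, the values of $\tilde\phi$ on different lifts of any single point of $T$ differ by integers of absolute value at least $1$; hence $\pi\tilde\phi^{-1}(-1/4, 1/4)$ is disjoint from $\pi\tilde\phi^{-1}(\pm 1/4)$, and since $\gamma$ lies in the former, the continuous function $\tilde\phi \circ \tilde\gamma$ can never take the values $\pm 1/4$. Combined with $\tilde\phi(\tilde\gamma(0)) = -1/8$ this forces $\tilde\phi \circ \tilde\gamma \in (-1/4, 1/4)$, and by the same reasoning at $t=1$, $\tilde\phi(\tilde\gamma(1)) = 1/8$. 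Letting $t_+$ be the first time $\tilde\phi \circ \tilde\gamma$ hits $1/8$ and $t_-$ the last preceding time it hits $-1/8$, I restrict $\gamma$ to $\gamma' := \gamma|_{[t_-, t_+]}$, so that $\tilde\phi \circ \tilde\gamma' \in [-1/8, 1/8]$. The key geometric consequence is a slab-containment statement: for every $x$ on $\gamma'$ and every $\rho < \delta$, $B(x, \rho) \subset \pi\tilde\phi^{-1}(-1/4, 1/4)$. Indeed, lifting a short path from $x$ using that $\pi$ is a local isometry, and applying the intermediate value theorem through $\tilde\phi^{-1}(\pm 1/8)$ together with the distance bounds packaged in $\delta$, shows that $\tilde\phi^{-1}(\pm 1/4)$ sits at distance at least $\delta$ from $\tilde\phi^{-1}[-1/8, 1/8]$ in $\tilde T$. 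The same argument at the endpoints refines to $B(\gamma'(0), \rho) \subset U^-$ and $B(\gamma'(1), \rho) \subset U^+$.

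Set $r = 1/(2\lambda n)$; then both $r < \delta$ and $2\lambda r = 1/n < \delta$ follow from $n > 1/\delta$. Using uniform continuity, cover $\gamma'$ by overlapping balls $B_i = B(\gamma'(s_i), r)$, $i = 0, \ldots, N$, with $d(\gamma'(s_i), \gamma'(s_{i+1})) < r/2$. The endpoint statements give $\mu(B_0 \cap U^+) = 0$ and $\mu(B_N \cap U^+) = \mu(B_N)$, so the smallest index $i^*$ with $\mu(B_{i^*} \cap U^+) \geq \mu(B_{i^*})/2$ exists in $\{1, \ldots, N\}$. Its predecessor satisfies $\mu(B_{i^*-1} \cap U^+) < \mu(B_{i^*-1})/2$, and since $B_{i^*-1}$ lies in the slab $U^+ \cup U^-$ (by slab-containment) this forces $\mu(B_{i^*-1} \cap U^-) > \mu(B_{i^*-1})/2$. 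Because $B_{i^*-1} \subset 2B_{i^*}$ and doubling makes $B_{i^*-1}, B_{i^*}, 2B_{i^*}$ mutually comparable in measure, both bounds transfer to
\[
\mu(2B_{i^*} \cap U^\pm) \geq c(C_\mu)\,\mu(2B_{i^*}).
\]
Feeding $B := 2B_{i^*}$ into Lemma \ref{lemma:riie}, whose $\lambda$-inflation has radius $1/n < \delta$ and is centered on $\gamma'$ and therefore lies in $\pi\tilde\phi^{-1}(-1/4, 1/4)$, yields the required estimate.

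The main obstacle I anticipate is the geometric coupling: the chain balls need radii comparable to $1/n$ so that the isoperimetric inequality outputs the correct $n$-factor, while the $\lambda$-inflation of the selected ball must still sit inside $\pi\tilde\phi^{-1}(-1/4, 1/4)$. The restriction to $\gamma'$, together with the quantitative separation of the level sets $\tilde\phi^{-1}(\pm 1/8)$ and $\tilde\phi^{-1}(\pm 1/4)$ encoded in $\delta$, is what makes this possible uniformly in $n > 1/\delta$; once it is in place, the extraction of a balanced ball is a routine application of the covering and doubling machinery.
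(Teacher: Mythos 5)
Your proposal is correct and takes essentially the same approach as the paper's (sketched) proof: restrict $\gamma$ so that $\tilde\phi$ stays in $[-1/8,1/8]$ along the lift, find a ball of radius comparable to $1/n$ centered on the path in which both $U^+$ and $U^-$ occupy a definite fraction of the measure, and feed it into Lemma \ref{lemma:riie}. The only cosmetic difference is that you extract the balanced ball via a discrete chain of overlapping balls, whereas the paper picks $t_0=\sup\{t:\Phi(t)\le 1/2\}$ for $\Phi(t)=\mu(U^+\cap B_t)/\mu(B_t)$ and then doubles the radius; both implementations rely on the same enlargement-plus-doubling device to control the measure ratios.
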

\begin{proof}
The proof is essentially contained in the discussion following Lemma 5.1 in \cite{LohvansuuRajala2018}. We sketch the idea here for completeness. Given a path $\gamma: [0, 1]\rightarrow T$ of $\Gamma$, we consider the balls $B_t:=B(\gamma(t), \frac{1}{2\lambda n})$, where $\lambda$ is as in Lemma \ref{lemma:riie}. We may assume that $|\gamma|$ is contained in $\pi\tilde\phi^{-1}[-1/8, 1/8]$, and therefore by the definition of $\delta$ each $B_t$ is contained in $\pi\tilde\phi^{-1}(-1/4, 1/4)$. Now the function
\[
\Phi: t\mapsto \frac{\mu(U^+\cap B_t)}{\mu(B_t)}
\]
vanishes when $t$ is near $0$ and is equal to $1$ when $t$ is near $1$. Pick 
\[
t_0:=\sup\{ t\in (0, 1)\, |\ \Phi(t)\leq 1/2 \}
\]
and choose $B_\gamma^n:=2\lambda B_{t_0}$. The lower bound on the measure of the boundary is then given by the relative isoperimetric inequality.
\end{proof}

Now let $\mathcal{F}_n$ be the collection of balls $B^n_\gamma$ that arise from the paths in $\Gamma$ as in Corollary \ref{cor:polkupallot} with $n$ fixed. Apply the $5r$ covering theorem on $\mathcal{F}_n$ to find a pairwise disjoint subcollection $\mathcal{G}_n$ with the property
\[
\bigcup_{B\in \mathcal{F}_n}B\subset \bigcup_{B\in\mathcal{G}_n}5B.
\]
Note that $\mathcal{G}_n$ must be finite due to the compactness of $T$. Write $\mathcal{G}_n=\{B_i\}_{i=1}^N$. Define a positive Borel function $\rho: T\rightarrow \R$ with 
\[
\rho:=n\sum_{i=1}^N\chi_{5B_i}.
\]
Let $\Omega$ be the open set that consists of the points that can be connected to $\pi\tilde\phi^{-1}(-1/8)$ by a rectifiable path inside $\pi\tilde\phi^{-1}(-1/4, 1/4)$. Define a function $\tilde\psi: T\rightarrow\R$ inside $\Omega$ with
\[
\tilde\psi(x):=\inf_{\gamma_x}\int_{\gamma_x}\rho\, ds,
\]
where the infimum is taken over all rectifiable paths $\gamma_x$ that connect $\pi\tilde\phi^{-1}(-1/8)$ to $x$ inside $\pi\tilde\phi^{-1}(-1/4, 1/4)$. Extend $\tilde\psi$ as zero to the rest of $T$. Finally, the desired competing admissible map $\psi: T\rightarrow \R/\Z$ is defined by
\[
\psi(x):=[\min\{ 1, \tilde\psi(x)\}].
\]
\begin{lemma}\label{lemma:liplocpsi}
The mapping $\psi$ is Lipschitz and $\rho$ is one of its upper gradients.
\end{lemma}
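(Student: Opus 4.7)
The plan is to verify the upper gradient inequality $|\psi(x)-\psi(y)|_{\R/\Z}\leq\int_\gamma\rho\,ds$ for every rectifiable path $\gamma$ from $x$ to $y$; Lipschitzness then follows from the boundedness $\rho\leq n|\mathcal{G}_n|$ together with the (assumed) geodesic structure on $T$. Inside $\Omega$ the inequality is standard: for $x,y\in\Omega$ and a rectifiable path $\gamma$ between them in $\pi\tilde\phi^{-1}(-1/4,1/4)$, concatenating $\gamma$ with near-minimizing paths for $\tilde\psi(x)$ gives $\tilde\psi(y)\leq\tilde\psi(x)+\int_\gamma\rho\,ds$ and symmetrically. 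This property is inherited by $\min\{1,\tilde\psi\}$ and, via the $1$-Lipschitz quotient $[\cdot]\colon\R\to\R/\Z$, by $\psi$.

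The main work is the behavior of $\psi$ across $\partial\Omega$. Once $n$ is large enough that $5/n<\delta$, each ball $5B_i$ is contained in the $5/n$-neighborhood of $\pi\tilde\phi^{-1}[-1/8,1/8]$, which by the definition of $\delta$ sits at positive distance from $T\setminus\pi\tilde\phi^{-1}(-1/4,1/4)\supseteq\partial\Omega$. Hence around any $x\in\partial\Omega$ one can choose a ball $V$ on which $\rho\equiv 0$, and the interior inequality forces $\tilde\psi$ to be locally constant on each path-component of $V\cap\Omega$. The crucial claim is that every such constant lies in $\{0\}\cup[1,\infty)$, so that $[\min\{1,\tilde\psi\}]=[0]$ agrees with the zero-extension of $\psi$ outside $\Omega$ and $\psi$ is continuous across $\partial\Omega$.

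To prove this dichotomy I would fix a component $\tilde\Omega\subset\tilde\phi^{-1}(-1/4,1/4)$ projecting to $\Omega$ and identify each $y\in V\cap\Omega$ with its unique lift $\tilde y\in\tilde\Omega$. If $\tilde\phi(\tilde y)>1/8$, then any competitor $\gamma$ from $\pi\tilde\phi^{-1}(-1/8)$ to $y$ inside $\pi\tilde\phi^{-1}(-1/4,1/4)$ lifts to $\tilde\Omega$ with $\tilde\phi\circ\tilde\gamma$ moving continuously from $-1/8$ past $1/8$, so by the intermediate value theorem $\gamma$ contains a subpath $\gamma'\in\Gamma$. Corollary \ref{cor:polkupallot} together with the $5r$-covering places $B^n_{\gamma'}\subset 5B_i$ for some $i$, so the center of $B^n_{\gamma'}$ lies on $\gamma'$ at depth $\geq 1/n$ in $5B_i$; since $\gamma'$ begins and ends outside $5B_i$ for $n$ large, $\mathrm{length}(\gamma'\cap 5B_i)\geq 1/n$ and $\int_{\gamma'}\rho\geq n\cdot(1/n)=1$. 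Taking infima yields $\tilde\psi(y)\geq 1$. If instead $\tilde\phi(\tilde y)<-1/8$, a path from $\pi\tilde\phi^{-1}(-1/8)$ to $y$ confined to $\pi\tilde\phi^{-1}(-1/4,-1/8]$ avoids $\mathrm{supp}\,\rho$ entirely, forcing $\tilde\psi(y)=0$.

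With continuity of $\psi$ established and $\rho\equiv 0$ off $\Omega$, the global upper gradient inequality follows by splitting a general rectifiable path into maximal subpaths in $\Omega$ and its complement and adding the contributions, noting that the transitions at $\partial\Omega$ contribute no jump in $\R/\Z$. The main obstacle I foresee is the lift-based dichotomy at the boundary, in particular the intermediate value argument: one must ensure that the path $\gamma$ really does produce a legitimate element $\gamma'\in\Gamma$ inside $\pi\tilde\phi^{-1}(-1/4,1/4)$, which is where the separation $\delta$ and the choice of $n$ enter critically.
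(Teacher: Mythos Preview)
Your proposal is correct and follows essentially the same approach as the paper: both arguments reduce to showing that $\tilde\psi\geq 1$ on the ``far side'' of $\Omega$ (points past $\pi\tilde\phi^{-1}(1/8)$) via a subpath in $\Gamma$ together with the $5r$-covering, and that $\tilde\psi=0$ on the ``near side'' (points in $\pi\tilde\phi^{-1}(-1/4,-1/8)\cap\Omega$) where $\rho$ vanishes, then patch the upper gradient inequality across $\partial\Omega$. The paper phrases this as identifying the level-set regions $\{\min\{1,\tilde\psi\}=0\}$ and $\{\min\{1,\tilde\psi\}=1\}$ directly and splitting a general path at points where $\psi=[0]$, whereas you recast it as a boundary-continuity statement via lifts; the substance is the same. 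One small point: your estimate $\int_{\gamma'}\rho\geq 1$ via ``$\gamma'$ begins and ends outside $5B_i$'' is not quite the right bookkeeping---the paper instead uses $\Ha^1(|\gamma'|\cap B^n_{\gamma'})\geq 1/n$ together with $B^n_{\gamma'}\subset\bigcup_i 5B_i$, which avoids needing to locate the endpoints of $\gamma'$ relative to any individual $5B_i$.
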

\begin{proof}
It is straightforward to prove that $\rho$ is an upper gradient of both $\tilde\psi$ and $\min\{1, \tilde\psi\}$ in $\Omega$, see e.g. \cite[Lemma 5.25]{BjornBjornNonlin}. Let $\gamma$ be a rectifiable path in $T$ that connects two points $x, y\in T$. The upper gradient inequality for the pair $(\psi, \rho)$ on $\gamma$ is immediate if $x, y\in\Omega$ and $|\gamma|\subset\Omega$, or if $\psi(x)=\psi(y)$. 

In order to prove the upper gradient inequality in the other possible situations we need to show that $\tilde\psi\geq 1$ on $\pi\tilde\phi^{-1}(1/8, 1/4)\cap\Omega$. To this end, let $\eta$ be a rectifiable path that connects $\pi\tilde\phi^{-1}(-1/8)$ to a point $x\in\pi\tilde\phi^{-1}(1/8, 1/4)$ inside $\pi\tilde\phi^{-1}(-1/4, 1/4)$. Then $\eta$ has a subpath $\eta'\in\Gamma$. Let $B_{\eta'}^n\in\mathcal{F}_n$ be the ball obtained by applying Corollary \ref{cor:polkupallot} on $\eta'$. Now
\[
\int_\eta\rho\, ds\geq\int_{|\eta'|}\rho\, d\Ha^1\geq n\sum_{i=1}^N\Ha^1(|\eta'|\cap 5B_i)\geq n\Ha^1(|\eta'|\cap B_{\eta'}^n)\geq 1,
\]
since $B_{\eta'}^n$ is covered by the balls $5B_i$. This holds for every connecting path $\eta$, which implies that $\tilde\psi(x)\geq 1$.

Next assume $x, y\in\Omega$ with $\tilde\psi(x), \tilde\psi(y)\in (0, 1)$ and $|\gamma|\not\subset\Omega$. Note that $\min\{1, \tilde\psi\}$ equals $0$ in $\pi\tilde\phi^{-1}(-1/4, -1/8)\cap\Omega$, since $\rho$ vanishes there. This means that there exist subpaths $\gamma_1=\gamma|_{[0, t_1]}$ and $\gamma_2=\gamma|_{[t_2, 1]}$ of $\gamma$ that satisfy $|\gamma_1|\cup|\gamma_2|\subset\Omega$ and $\psi(\gamma(t_1))=\psi(\gamma(t_2))=[0]$. Therefore
\begin{align*}
|\psi(x)-\psi(y)|&\leq|\psi(x)-\psi(\gamma(t_1))|+|\psi(\gamma(t_2))-\psi(y)| \\
&\leq\int_{\gamma_1}\rho\, ds+\int_{\gamma_2}\rho\, ds \\
&\leq\int_\gamma\rho \, ds.
\end{align*}
The same argument can be applied in the case of $x\in\Omega$, $y\not\in\Omega$. We omit the details.

The upper gradient inequality implies that $\psi$ is Lipschitz, since $T$ is geodesic and $\rho$ is bounded.
\end{proof}
\subsection{Degree of $\psi$}
In this subsection we prove that $\degg\psi=1$.

Pick a rectifiable degree 1 loop $\gamma$ and a point $a\in (1/8, 1/4)$. We may now assume that the endpoints of $\gamma$ are on $\pi\tilde\phi^{-1}(a)$. Since $T$ is geodesic and semilocally simply connected, we may assume that $\gamma$ has finite length. This, and moving the starting point if necessary, allows us to decompose $\gamma$ into 
\begin{equation}\label{eq:gammadecomposition}
\gamma=(\gamma_1*\eta_1)*\cdots*(\gamma_k*\eta_k),
\end{equation}
so that each $\gamma_i$ intersects $\pi\tilde\phi^{-1}(a)$ precisely at the endpoints, and none of the paths $\eta_i$ intersect $\pi\tilde\phi^{-1}(-a)$. 

For the next lemma we denote for brevity $\zeta:=\min\{1, \tilde\psi\}$.
\begin{lemma}\label{lemma:polkulemma1}
Let $\eta: [0, 1]\rightarrow \Omega$ be a rectifiable path. Suppose that the endpoints of $\zeta_*\eta$ belong to $\{0, 1\}$. Then $\psi_*\eta$ is loop-homotopic to $\zeta_*\eta(1)-\zeta_*\eta(0)$ times the standard generator of $\pi_1(\R/\Z, [0])$.
\end{lemma}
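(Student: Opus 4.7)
The plan is to reduce the claim to the classical fact that the class of a loop in $\pi_1(\R/\Z, [0]) \cong \Z$ is computed as the difference of the endpoints of any continuous lift to the universal cover $\R \to \R/\Z$. The construction of $\psi$ as the projection of $\zeta = \min\{1, \tilde\psi\}$ essentially hands us such a lift for free along $\eta$, so the argument should be almost immediate once continuity is established.

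First, I would record that $\tilde\psi$ is continuous (in fact locally Lipschitz) on $\Omega$. This follows from its definition as an infimum of path integrals of the bounded Borel function $\rho$ along rectifiable paths in the geodesic space $T$, and it is already implicit in the proof of Lemma \ref{lemma:liplocpsi}. Hence $\zeta = \min\{1,\tilde\psi\}$ is continuous on $\Omega$, and since $\eta$ takes values in $\Omega$, the composition $\zeta \circ \eta : [0,1] \to [0,1]$ is a continuous real-valued function. By the very definition $\psi = [\zeta]$ on $\Omega$, so $\psi_*\eta = [\zeta_*\eta]$ as paths in $\R/\Z$; equivalently, $\zeta_*\eta$ is a continuous lift of $\psi_*\eta$ to $\R$ along the covering projection $[\cdot]\colon \R \to \R/\Z$.

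Next, since the hypothesis forces $\zeta_*\eta(0), \zeta_*\eta(1) \in \{0,1\}$, we have $\psi_*\eta(0) = \psi_*\eta(1) = [0]$, so $\psi_*\eta$ is indeed a loop based at $[0]$. Set $n := \zeta_*\eta(1) - \zeta_*\eta(0) \in \{-1,0,1\}$. The shifted function $\zeta_*\eta - \zeta_*\eta(0)$ is then a continuous lift of $\psi_*\eta$ that starts at $0$ and ends at $n$. The standard isomorphism
\[
\pi_1(\R/\Z, [0]) \xrightarrow{\ \cong\ } \Z, \qquad [\alpha] \mapsto \tilde\alpha(1),
\]
where $\tilde\alpha$ is the unique lift of $\alpha$ starting at $0$, sends $n$ times the standard generator $\beta(t)=[t]$ to the integer $n$. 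Therefore $\psi_*\eta$ is loop-homotopic to $n\beta$, which is the claim.

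There is no real obstacle: once continuity of $\zeta$ on $\Omega$ is noted, the statement is just the path-lifting criterion for the covering $\R \to \R/\Z$ applied to the lift supplied by the construction of $\psi$.
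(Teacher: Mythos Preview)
Your argument is correct and is essentially the same approach as the paper's: both use that $\zeta_*\eta$ is a continuous lift of $\psi_*\eta$ through the covering $\R\to\R/\Z$, so the loop class is read off from the endpoint difference. The paper simply makes the covering-space fact explicit by writing down the straight-line homotopy $H(s,t)=[s\,\zeta_*\eta(t)+(1-s)t]$ (in the case $\zeta_*\eta(0)=0$, $\zeta_*\eta(1)=1$) rather than invoking the general lifting criterion.
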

\begin{proof}
If the starting point is $0$ and the end point is $1$, the homotopy is given by $H: [0, 1]^2\rightarrow \R/\Z$, 
\[
H(s, t)=[s\zeta_*\eta(t)+(1-s)t].
\]
It is straightforward to check all the requirements. The other cases are similar.
\end{proof}
\begin{corollary}\label{cor:polkucor1}
The paths $\psi_*\eta_i$ and $\phi_*\eta_i$ are loop-contractible.
\end{corollary}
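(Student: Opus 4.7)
The plan is to lift $\eta_i$ to the universal cover $\tilde T$ and exploit the restricted $\tilde\phi$-range of the lift, together with simple connectedness of $\tilde T$, to produce continuous real-valued lifts of $\phi\circ\eta_i$ and $\psi\circ\eta_i$ whose endpoints coincide.

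Fix a lift $\tilde\eta_i: [0,1]\rightarrow \tilde T$ of $\eta_i$. Since $\eta_i$ avoids $\pi\tilde\phi^{-1}(-a)$, $\tilde\eta_i$ avoids $\pi^{-1}\pi\tilde\phi^{-1}(-a)=\tilde\phi^{-1}(-a+\degg\phi\cdot\Z)$, and continuity forces $\tilde\phi\circ\tilde\eta_i$ into a single open component of $\R\setminus(-a+\degg\phi\cdot\Z)$, which is an interval of length $\degg\phi$. After replacing $\tilde\eta_i$ by $\tau^{-k}\tilde\eta_i$ for a suitable $k$ I may take this interval to be $I:=(-a,\degg\phi-a)$. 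The endpoints of $\tilde\eta_i$ project onto $\pi\tilde\phi^{-1}(a)$, so their $\tilde\phi$-values lie in $(a+\degg\phi\cdot\Z)\cap I$; the constraints $a\in(1/8,1/4)$ and $\degg\phi\geq 1$ force this set to be $\{a\}$, yielding $\tilde\phi(\tilde\eta_i(0))=\tilde\phi(\tilde\eta_i(1))=a$.

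For $\phi_*\eta_i$ this already suffices: $\tilde\phi\circ\tilde\eta_i$ is a continuous $\R$-lift of $\phi\circ\eta_i$ whose endpoints both equal $a$, so the loop has winding zero and is null-homotopic. For $\psi_*\eta_i$ I pick any continuous lift $\tilde\Psi:\tilde T\rightarrow\R$ of $\psi\circ\pi$ (available since $\tilde T$ is simply connected). On $\pi\tilde\phi^{-1}(a)\subset\pi\tilde\phi^{-1}(1/8,1/4)$ the function $\zeta:=\min\{1,\tilde\psi\}$ takes only the values $0$ (outside $\Omega$) and $1$ (inside $\Omega$), so $\psi\equiv[0]$ there and $\tilde\Psi\circ\tilde\eta_i$ takes integer values at $t=0,1$. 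I then exploit simple connectedness of $\tilde T$ to replace $\tilde\eta_i$ by a homotopic path $\tilde\eta_i'$ with the same endpoints, chosen in a convenient subset: if both endpoints lie in the lifted copy $\tilde\Omega_0:=(\pi|_{\tilde V_0})^{-1}(\Omega)\subset\tilde V_0:=\tilde\phi^{-1}(-1/4,1/4)$, I take $\tilde\eta_i'$ inside the path-connected $\tilde\Omega_0$, so that $\pi\tilde\eta_i'\subset\Omega$ and Lemma~\ref{lemma:polkulemma1} applied with $\zeta=1$ at both endpoints gives winding zero; if both endpoints lie in the same path component of $\tilde T\setminus\pi^{-1}(\Omega)$, I take $\tilde\eta_i'$ inside that component, so that $\pi\tilde\eta_i'\subset T\setminus\Omega$ where $\psi\equiv[0]$, again yielding winding zero.

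The main obstacle is the mixed case for $\psi_*\eta_i$, in which one endpoint of $\tilde\eta_i$ lies in $\tilde\Omega_0$ and the other in $\tilde T\setminus\pi^{-1}(\Omega)$, or both lie in different components of $\tilde T\setminus\pi^{-1}(\Omega)$. Handling these cases requires tracking the integer value $\tilde\Psi$ assigns to each complementary component, pinned down by continuity at $\partial\tilde\Omega_0\cap\tilde\phi^{-1}(\pm 1/4)$, together with the arithmetic fact that only $\tilde\Omega_0$ itself (and not any other $\tau^k\tilde\Omega_0$) meets the level set $\tilde\phi^{-1}(a)$ inside the strip $\tilde\phi^{-1}(I)$, and the homeomorphism $\pi|_{\tilde V_0}:\tilde V_0\rightarrow U:=\pi\tilde\phi^{-1}(-1/4,1/4)$ used to transport the topology of complementary components in $\tilde T$ back to components of $U\setminus\Omega$ in $T$.
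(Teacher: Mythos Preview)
Your argument for $\phi_*\eta_i$ is correct; the paper does this in one line by observing that $\phi_*\eta_i$ misses the point $[-a]\in\R/\Z$ (since $\eta_i$ avoids $\pi\tilde\phi^{-1}(-a)$ and $\degg\phi\in\Z$), hence is not surjective and therefore null-homotopic.

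Your treatment of $\psi_*\eta_i$ is incomplete. You explicitly label the mixed case an ``obstacle'' and then only list the ingredients you expect to need (values of $\tilde\Psi$ on complementary components, the homeomorphism $\pi|_{\tilde V_0}$, an arithmetic fact about $\tilde\phi^{-1}(a)$); this is a plan, not a proof. The cases you do claim to handle already have gaps: you assert that $\tilde\Omega_0$ is path-connected, but $\Omega$ is by definition the set of points reachable from $\pi\tilde\phi^{-1}(-1/8)$ inside $\pi\tilde\phi^{-1}(-1/4,1/4)$, and since $\pi\tilde\phi^{-1}(-1/8)$ need not be connected, $\Omega$ (hence $\tilde\Omega_0$) may well have several components. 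The same objection applies to your assumption that the two endpoints lie in a common component of $\tilde T\setminus\pi^{-1}(\Omega)$.

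The paper sidesteps all of this by a further subdivision rather than path replacement. Cut $\eta_i$ at every time it meets $\pi\tilde\phi^{-1}(a)$. Since $\eta_i$ also avoids $\pi\tilde\phi^{-1}(-a)$, each resulting piece $\eta_i^j$ has its lifted $\tilde\phi$-values confined either to $(-a,a)$ or to $(a,\degg\phi-a)$. In the first alternative $\eta_i^j$ lies in $\pi\tilde\phi^{-1}(-1/4,1/4)$, and because $\Omega$ is closed under rectifiable paths in that set, $\eta_i^j$ is either entirely in $\Omega$ (Lemma~\ref{lemma:polkulemma1} with $\zeta=1$ at both endpoints gives winding zero) or entirely outside $\Omega$ (so $\psi\equiv[0]$ along it). In the second alternative $[a,\degg\phi-a)$ misses $[-1/8,1/8]+\degg\phi\Z$ since $a>1/8$, so $\eta_i^j$ never enters $\pi\tilde\phi^{-1}[-1/8,1/8]\supset\{\psi\neq[0]\}$ and $\psi_*\eta_i^j$ is again constant. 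No global lifts $\tilde\Psi$, no component bookkeeping, and no appeal to simple connectedness are required.
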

\begin{proof}
The endpoints of the path $\eta_i$ must be in the set $\pi\tilde\phi^{-1}(a)$. Since $\gamma$ has finite length, $\eta_i$ can be decomposed into
\[
\eta_i=\eta_i^1*\cdots*\eta_i^l,
\]
where the endpoints of each $\eta_i^j$ are in $\pi\tilde\phi^{-1}(a)$, and if $|\eta_i^j|\not\subset\Omega$, then there are no other intersections with $\pi\tilde\phi^{-1}(a)$.
 
Now if $\eta_i^j$ is contained in $\Omega$, Lemma \ref{lemma:polkulemma1} implies that it is loop-contractible. Otherwise $\psi_*\eta_i^j$ is already a constant path. Therefore $\psi_*\eta_i$ is loop-contractible as well. The path $\phi_*\eta_i$ cannot be surjective, so it is loop-contractible. 
\end{proof}

Let $\alpha: \R/\Z\rightarrow \R/\Z$ be the isomorphism $\alpha[x]=[x-a]$. Note that $\alpha_*\phi_*\gamma_i$ and $\psi_*\gamma_i$ are all loops with the same basepoint $[0]$.

Denote the domain of $\gamma_i$ by $[a_i, b_i]$. Let $\gamma'_i: [a_i, b_i]\rightarrow \R$ be the unique lift of $\alpha_*\phi_*\gamma_i$ for which $\gamma'_i(a_i)=0$. Further decompose each $\gamma_i$ into
\[
\gamma_i=\gamma_i^1*\gamma_i^2*\gamma_i^3,
\]
where $\gamma_i^1$ and $\gamma_i^3$ intersect $\pi\tilde\phi^{-1}(\pm a)$ exactly at their endpoints. 
\begin{lemma}\label{lemma:polkulemma2}
The lifted path $\gamma'_i$ intersects integer multiples of $\degg\phi$ exactly at its endpoints. In particular $\gamma'_i(b_i)=\pm\degg\phi$ or $\gamma'_i(b_i)=0$. Moreover, $\gamma_i^1$ (respectively $\gamma_i^3$) is contained in $\Omega$ if and only if $\gamma'_i$ is negative in a neighborhood of $a_i$ ($\gamma'_i\leq\gamma'(b_i)$ in a neighborhood of $b_i$).
\end{lemma}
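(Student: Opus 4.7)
The plan is to transfer the claim to the universal cover $\tilde T$ by lifting $\gamma_i$. Fix a preimage $\tilde p\in\pi^{-1}(\gamma_i(a_i))$ with $\tilde\phi(\tilde p)=a$; such a $\tilde p$ exists because the fibre $\pi^{-1}(\gamma_i(a_i))$ has $\tilde\phi$-values $a+\degg\phi\cdot\Z$. Let $\tilde\gamma_i$ be the unique lift of $\gamma_i$ starting at $\tilde p$. By uniqueness of continuous lifts of $\alpha_*\phi_*\gamma_i$ in $\R$ starting at $0$, we then have $\gamma'_i=\tilde\phi\circ\tilde\gamma_i-a$.

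For the first claim I would observe that $\gamma'_i(t)\in\degg\phi\cdot\Z$ iff $\tilde\phi(\tilde\gamma_i(t))\in a+\degg\phi\cdot\Z$, iff $\gamma_i(t)\in\pi\tilde\phi^{-1}(a)$, which by assumption occurs only at $t\in\{a_i,b_i\}$. Since $\gamma'_i$ is continuous with $\gamma'_i(a_i)=0$ and avoids $\degg\phi\cdot\Z$ on $(a_i,b_i)$, its image on this open interval is trapped in one of the components $(-\degg\phi,0)$ or $(0,\degg\phi)$ of $\R\setminus\degg\phi\cdot\Z$. Therefore $\gamma'_i(b_i)$ lies in the closure of one of these intersected with $\degg\phi\cdot\Z$, giving $\gamma'_i(b_i)\in\{-\degg\phi,0,\degg\phi\}$.

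For the moreover part, the same reasoning applied to $\gamma_i^1$ and $\gamma_i^3$ (which meet $\pi\tilde\phi^{-1}(\pm a)$ only at their endpoints) shows that $\tilde\phi\circ\tilde\gamma_i^1$ and $\tilde\phi\circ\tilde\gamma_i^3$ are each trapped in a closed interval between two consecutive values of $\pm a+\degg\phi\cdot\Z$; for $a\in(1/8,1/4)$ and $\degg\phi\geq 1$ these values in sorted order read $\ldots,-a,a,\degg\phi-a,\degg\phi+a,\ldots$. For $\gamma_i^1$: if $\gamma'_i<0$ near $a_i$, then $\tilde\phi\circ\tilde\gamma_i^1$ takes values in $[-a,a]$ and ends at $-a$, so $\gamma_i^1\subset\pi\tilde\phi^{-1}(-1/4,1/4)$; since $-a<-1/8<a$ the intermediate value theorem supplies a point of $\tilde\gamma_i^1$ with $\tilde\phi=-1/8$, so $\gamma_i^1\cap\pi\tilde\phi^{-1}(-1/8)\neq\emptyset$ and $\gamma_i^1\subset\Omega$. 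If instead $\gamma'_i>0$ near $a_i$, then $\tilde\phi\circ\tilde\gamma_i^1\subset[a,\degg\phi-a]$, and the point with $\tilde\phi=\degg\phi/2$ projects to a point of $T$ whose $\phi$-preimages in $\tilde T$ take $\tilde\phi$-values of the form $(k+1/2)\degg\phi$, $k\in\Z$; each of these is at distance at least $\degg\phi/2\geq 1/2>1/4$ from $\degg\phi\cdot\Z$, hence outside $\pi\tilde\phi^{-1}(-1/4,1/4)$, so $\gamma_i^1\not\subset\Omega$. The argument for $\gamma_i^3$ is symmetric after translating by $\gamma'_i(b_i)\in\degg\phi\cdot\Z$: approach from below places $\tilde\phi\circ\tilde\gamma_i^3$ in $[\gamma'_i(b_i)-a,\gamma'_i(b_i)+a]$ and the IVT gives a point with $\tilde\phi\in-1/8+\degg\phi\cdot\Z$; approach from above places it in $[\gamma'_i(b_i)+a,\gamma'_i(b_i)+\degg\phi-a]$, whose midpoint escapes $\pi\tilde\phi^{-1}(-1/4,1/4)$ by the same half-integer computation.

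The main obstacle is the bookkeeping: one must handle the three possible values $\gamma'_i(b_i)\in\{-\degg\phi,0,\degg\phi\}$ uniformly in the $\gamma_i^3$ analysis and check that the "bad midpoint" argument really works for every positive integer $\degg\phi$, rather than just the model case $\degg\phi=1$ where the picture is easiest to draw.
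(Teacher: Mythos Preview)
Your argument is correct and follows the same route as the paper: both lift $\gamma_i$ to $\tilde T$, identify $\gamma'_i$ with $\tilde\phi\circ\tilde\gamma_i-a$ via uniqueness of lifts, and use the deck-transform relation $\tilde\phi\circ\tau^k=k\,\degg\phi+\tilde\phi$ to translate membership in $\pi\tilde\phi^{-1}(a)$ into the condition $\gamma'_i(t)\in\degg\phi\cdot\Z$. For the second claim the paper is terser --- it records the equivalence $\gamma_i^1\subset\Omega\iff\gamma_i^1\subset\pi\tilde\phi^{-1}[-a,a]$ and reads off the sign of $\gamma'_i$ from $k=0$ --- whereas your explicit IVT check (hitting $-1/8$) and midpoint check (hitting $\degg\phi/2$) are precisely what unpacks that equivalence, so the two arguments differ only in level of detail.
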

\begin{proof}
Let $\tilde\gamma_i: [a_i, b_i]\rightarrow\tilde T$ be the lift of $\gamma_i$ that satisfies $\tilde\phi_*\tilde\gamma_i(a_i)=a$. Then due to uniqueness of lifts we have $\gamma'_i=\tilde\phi_*\tilde\gamma_i-a$. Since $\tilde\phi$ is a lift of $\phi$, we have $\tilde\phi\circ\tau^k=k\cdot\degg\phi+\tilde\phi$ for any integer $k$. It follows that $\gamma_i'(t)=k\cdot\degg\phi$ if and only if $\tilde\phi(\tau^{-k}(\tilde\gamma_i(t)))=a$, which can be combined with the lifting property $\pi_*\tilde\gamma_i=\gamma_i$ to conclude that $\gamma_i'(t)$ equals an integer multiple of $\degg\phi$ if and only if $\gamma_i(t)\in\pi\tilde\phi^{-1}(a)$. By construction the latter happens if and only if $t$ equals either endpoint of $[a_i, b_i]$. This proves the first assertion of the lemma.

The definitions of $\gamma_i^1, \gamma_i^3$ and $\Omega$ imply that these paths are contained in $\Omega$ if and only if they are contained in $\pi\tilde\phi^{-1}[-a, a]$. Therefore $\gamma_i^1$ is contained in $\Omega$ if and only if the part of $\tilde\gamma_i$ corresponding to $\gamma_i^1$ is contained in $\tilde\phi^{-1}(k\cdot\degg\phi + [-a, a])$ for some fixed integer $k$. This $k$ must be $0$, since we chose $\tilde\phi_*\tilde\gamma_i(a_i)=a$. Thus $\gamma'_i=\tilde\phi_*\tilde\gamma_i-a$ is negative in a neighborhood of $a_i$ if and only if $\gamma_i^1$ is contained in $\Omega$. The path $\gamma_i^3$ can be treated similarly.
\end{proof}
\begin{corollary}\label{cor:polkucor2}
The paths $\alpha_*\phi_*\gamma_i$ and $\degg \phi\cdot\psi_*\gamma_i$ are loop-homotopic.
\end{corollary}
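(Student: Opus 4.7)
The plan is to reduce the claim to an equality of winding numbers in $\pi_1(\R/\Z,[0])$. Both $\alpha_*\phi_*\gamma_i$ and $\degg\phi\cdot\psi_*\gamma_i$ are loops based at $[0]\in\R/\Z$: the former because $\alpha_*\phi$ sends $\pi\tilde\phi^{-1}(a)$ to $[0]$, and the latter because $\zeta=\min\{1,\tilde\psi\}$ takes values in $\{0,1\}$ at points of $\pi\tilde\phi^{-1}(a)$ so $\psi=[\zeta]=[0]$ there. Two loops at a basepoint in $\R/\Z$ are loop-homotopic exactly when they have the same winding number, so it suffices to prove that the winding of $\psi_*\gamma_i$ equals $\gamma_i'(b_i)/\degg\phi$, a well-defined integer in $\{-1,0,+1\}$ by Lemma \ref{lemma:polkulemma2}; multiplying by $\degg\phi$ then matches the winding of $\alpha_*\phi_*\gamma_i$, which is $\gamma_i'(b_i)$ by the definition of $\gamma_i'$.

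To compute the winding of $\psi_*\gamma_i$, I would continuously lift $\psi\circ\gamma_i$ to a map $f:[a_i,b_i]\to\R$. Since $\psi=[\zeta]$ on $T$ and $\zeta\equiv 0$ outside $\Omega$, the lift $f$ equals $\zeta\circ\gamma_i$ plus a locally constant integer on each component of $\gamma_i^{-1}(\Omega)$, and is itself a locally constant integer on each component of $\gamma_i^{-1}(T\setminus\Omega)$. As recorded in the proof of Lemma \ref{lemma:liplocpsi}, $\zeta\equiv 0$ on $\pi\tilde\phi^{-1}(-1/4,-1/8)\cap\Omega$ and $\zeta\equiv 1$ on $\pi\tilde\phi^{-1}(1/8,1/4)\cap\Omega$, so the boundary values of $\zeta$ on $\partial\Omega$ lie in $\{0,1\}$. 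Continuity of $f$ at a point of $\partial\Omega$ then forces the integer part to jump by $+1$ at an exit through the $1$-side of the boundary (where $\zeta\to 1$ from inside), by $-1$ at an entry through the $1$-side, and to remain unchanged at a crossing of the $0$-side.

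In the universal cover the $1$-side of $\partial\Omega$ pulls back to the level set $\{\tilde\phi=1/4+k\degg\phi:k\in\Z\}$. Since the continuous lift $\tilde\phi\circ\tilde\gamma_i$ starts at $a\in(1/8,1/4)$ and ends at $a+\gamma_i'(b_i)$, an intermediate value argument (equivalently, the change of $\lfloor\,\cdot\,\rfloor$) shows that its net signed count of crossings of this level set equals $\gamma_i'(b_i)/\degg\phi$, which is therefore the net change in the integer part of $f$. Both endpoints $\gamma_i(a_i),\gamma_i(b_i)$ lie in $\pi\tilde\phi^{-1}(a)\subset\Omega$, so $\zeta$ takes the same value $1$ at both and its contribution to $f(b_i)-f(a_i)$ cancels; the winding of $\psi_*\gamma_i$ therefore equals $\gamma_i'(b_i)/\degg\phi$, as required.

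The hardest part of this plan is the careful tracking of the $\pm 1$ integer jumps at $\partial\Omega$: the asymmetry between the $0$-side and the $1$-side of the boundary is the essential feature of $\psi$ that makes the winding calculation work, and one must verify that it holds also in the cases where $\gamma_i$ is tangent to or repeatedly crosses $\partial\Omega$. A combinatorial alternative, closer in spirit to Lemma \ref{lemma:polkulemma1} and Corollary \ref{cor:polkucor1}, would use Lemma \ref{lemma:polkulemma2} to decide whether each of $\gamma_i^1,\gamma_i^3$ lies in $\Omega$, apply Lemma \ref{lemma:polkulemma1} to the pieces inside $\Omega$ for their $\zeta$-difference winding contributions, handle the remaining sub-paths that leave $\Omega$ by the integer-jump analysis above, and verify by a case analysis on $\gamma_i'(b_i)\in\{-\degg\phi,0,+\degg\phi\}$ that the three contributions always sum to $\gamma_i'(b_i)/\degg\phi$.
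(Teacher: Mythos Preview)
Your primary approach---lifting $\psi\circ\gamma_i$ to a real-valued $f$ and computing the winding number as a signed count of crossings of the ``$1$-side'' of $\partial\Omega$---is genuinely different from the paper's, but it has a gap. You assert that $\gamma_i(a_i),\gamma_i(b_i)\in\pi\tilde\phi^{-1}(a)\subset\Omega$ and hence $\zeta=1$ at both endpoints, but $\Omega$ is by definition only the part of $\pi\tilde\phi^{-1}(-1/4,1/4)$ reachable by a rectifiable path from $\pi\tilde\phi^{-1}(-1/8)$; nothing in the hypotheses forces every point of $\pi\tilde\phi^{-1}(a)$ to lie in $\Omega$. If, say, $\gamma_i(a_i)\notin\Omega$ while $\gamma_i(b_i)\in\Omega$, then $\zeta$ contributes $1-0$ rather than $0$ to $f(b_i)-f(a_i)$, and your cancellation fails. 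Relatedly, your identification of the $1$-side of $\partial\Omega$ with the full level set $\{\tilde\phi=1/4+k\degg\phi\}$ is too coarse: $\tilde\gamma_i$ can cross that level set while $\gamma_i$ is entirely outside $\Omega$ (so $\psi$ is constant and no integer jump occurs), which breaks the equality between level-set crossings and integer jumps of $f$. These are precisely the configurations that the paper's case analysis is designed to absorb.

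Your ``combinatorial alternative'' is essentially the paper's proof. The paper splits into the four cases according to whether $\gamma_i^1$ and $\gamma_i^3$ lie in $\Omega$ (this is what Lemma~\ref{lemma:polkulemma2} decides), applies Lemma~\ref{lemma:polkulemma1} to the pieces in $\Omega$, and observes directly that when $\gamma_i^1$ (or $\gamma_i^3$) is not in $\Omega$ the image $\psi_*\gamma_i^1$ is the constant loop $[0]$. The middle piece $\gamma_i^2$ is handled not by any integer-jump count but by the same decomposition argument as in Corollary~\ref{cor:polkucor1}, which shows $\psi_*\gamma_i^2$ is loop-contractible. Lemma~\ref{lemma:polkulemma2} then pins down $\gamma_i'(b_i)\in\{-\degg\phi,0,\degg\phi\}$ in each of the four cases, and the match with $\degg\phi\cdot\psi_*\gamma_i$ follows. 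If you pursue the alternative, you should replace the integer-jump step for $\gamma_i^2$ by this contractibility argument; once you do, you recover the paper's proof.
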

\begin{proof}
We need to check four different cases, corresponding to $\gamma_i^1$ and $\gamma_i^3$ being or not being contained in $\Omega$. The proofs are essentially the same, so we write down only one of them. 

Assume that $\gamma_i^1$ is not contained in $\Omega$ but $\gamma_i^3$ is. Then $\psi_*\gamma_i^1$ is a constant path, and $\psi_*\gamma_i^3$ is loop-homotopic to the standard generator by Lemma \ref{lemma:polkulemma1}. Arguing exactly as in the proof of Corollary \ref{cor:polkucor1}, we see that $\psi_*\gamma_i^2$ is loop-contractible. Therefore $\psi_*\gamma_i$ is loop-homotopic to the standard generator.

By Lemma \ref{lemma:polkulemma2} the lift $\gamma_i'$ satisfies $\gamma_i'(a_i)=0$ and $\gamma_i'(b_i)=\pm\degg\phi$ or $\gamma_i'(b_i)=0$. We also find that $\gamma_i'$ is positive in a neighborhood of $a_i$, and less than $\gamma_i'(b_i)$ in a neighborhood of $b_i$. Combining these gives $\gamma_i'(b_i)=\degg\phi$, which means precisely that $\alpha_*\phi_*\gamma_i$ is loop-homotopic to $\degg\phi$ times the standard generator.
\end{proof}
Applying Corollaries \ref{cor:polkucor1} and \ref{cor:polkucor2} to the decomposition \eqref{eq:gammadecomposition} yields 
\[
\alpha_*\phi_*\gamma\simeq \degg\phi\cdot\psi_*\gamma.
\]
Now by applying the identity $\degg(\alpha\circ\phi)=\degg\phi$, we see that $\psi_*\gamma$ is loop-homotopic to the standard generator. Therefore $\degg\psi=1$ and the proof of Proposition \ref{prop:degree} is finished.


\bibliographystyle{plain}
\bibliography{modulijuttubib}

\begin{thebibliography}{10}

\bibitem{BjornBjornNonlin}
Anders Bj\"{o}rn and Jana Bj\"{o}rn.
\newblock {\em Nonlinear potential theory on metric spaces}, volume~17 of {\em
  EMS Tracts in Mathematics}.
\newblock European Mathematical Society (EMS), Z\"{u}rich, 2011.

\bibitem{BjornBjorn2018}
Anders Bj\"{o}rn and Jana Bj\"{o}rn.
\newblock Local and semilocal {P}oincar\'{e} inequalities on metric spaces.
\newblock {\em J. Math. Pures Appl. (9)}, 119:158--192, 2018.

\bibitem{Cheeger1999}
J.~Cheeger.
\newblock Differentiability of {L}ipschitz functions on metric measure spaces.
\newblock {\em Geom. Funct. Anal.}, 9(3):428--517, 1999.

\bibitem{FreedmanHe1991}
Michael~H. Freedman and Zheng-Xu He.
\newblock Divergence-free fields: energy and asymptotic crossing number.
\newblock {\em Ann. of Math. (2)}, 134(1):189--229, 1991.

\bibitem{Fuglede1957}
Bent Fuglede.
\newblock Extremal length and functional completion.
\newblock {\em Acta Math.}, 98:171--219, 1957.

\bibitem{Gehring1962}
F.~W. Gehring.
\newblock Extremal length definitions for the conformal capacity of rings in
  space.
\newblock {\em Michigan Math. J.}, 9:137--150, 1962.

\bibitem{HKST}
Juha Heinonen, Pekka Koskela, Nageswari Shanmugalingam, and Jeremy~T. Tyson.
\newblock {\em Sobolev spaces on metric measure spaces, an approach based on
  upper gradients}, volume~27 of {\em New Mathematical Monographs}.
\newblock Cambridge University Press, Cambridge, 2015.

\bibitem{Ikonen2019}
Toni Ikonen.
\newblock Uniformization of metric surfaces using isothermal coordinates,
  preprint, ar{X}iv:1909.09113.

\bibitem{JonesLahti2019}
Rebekah Jones and Panu Lahti.
\newblock Duality of moduli and quasiconformal mappings in metric spaces,
  preprint, ar{X}iv:1905.02873.

\bibitem{KallunkiShan2001}
Sari Kallunki and Nageswari Shanmugalingam.
\newblock Modulus and continuous capacity.
\newblock {\em Ann. Acad. Sci. Fenn. Math.}, 26(2):455--464, 2001.

\bibitem{KorteLahti2014}
Riikka Korte and Panu Lahti.
\newblock Relative isoperimetric inequalities and sufficient conditions for
  finite perimeter on metric spaces.
\newblock {\em Ann. Inst. H. Poincar\'e Anal. Non Lin\'eaire}, 31(1):129--154,
  2014.

\bibitem{LohvansuuRajala2018}
Atte Lohvansuu and Kai Rajala.
\newblock Duality of moduli in regular metric spaces.
\newblock {\em Indiana Univ. Math. J.}, to appear.

\bibitem{Rajala2017}
Kai Rajala.
\newblock Uniformization of two-dimensional metric surfaces.
\newblock {\em Invent. Math.}, 207(3):1301--1375, 2017.

\bibitem{RickmanQR}
Seppo Rickman.
\newblock {\em Quasiregular mappings}, volume~26 of {\em Ergebnisse der
  Mathematik und ihrer Grenzgebiete (3) [Results in Mathematics and Related
  Areas (3)]}.
\newblock Springer-Verlag, Berlin, 1993.

\bibitem{Vaisala}
Jussi V\"ais\"al\"a.
\newblock {\em Lectures on {$n$}-dimensional quasiconformal mappings}.
\newblock Lecture Notes in Mathematics, Vol. 229. Springer-Verlag, Berlin-New
  York, 1971.

\bibitem{Ziemer1967}
William~P. Ziemer.
\newblock Extremal length and conformal capacity.
\newblock {\em Trans. Amer. Math. Soc.}, 126:460--473, 1967.

\end{thebibliography}
\vspace{1em}
\noindent
Department of Mathematics and Statistics, University of Jyv\"askyl\"a, P.O.
Box 35 (MaD), FI-40014, University of Jyv\"askyl\"a, Finland.\\

\emph{E-mail:} \settowidth{\hangindent}{\emph{aaaaaaaaa}}\textbf{atte.s.lohvansuu@jyu.fi} 
\end{document}